\def\Reals{{\mathbb{R}}}
\newtheorem{theorem}{Theorem}[section]
\newtheorem{lemma}[theorem]{Lemma}
\newtheorem{corollary}[theorem]{Corollary}
\numberwithin{equation}{section}
\def\N{{\mathcal{N}}}
\def\M{{\mathcal{M}}}
\def\S{{\mathcal{S}}}
\def\C{{\mathcal{C}}}
\def\J{{\mathcal{J}}}
\def\P{{\mathcal{P}}}
\def\({\bigl(}   \def\){\bigr)}
\def\abs#1{\mathopen|#1\mathclose|} \let\card=\abs
 \let\Card=\Abs
\def\svec{{\boldsymbol{s}}}
\def\tvec{{\boldsymbol{t}}}
\def\avec{{\boldsymbol{a}}}
\def\ff#1#2{[#1]_{#2}}
\def\ac#1{\card{\C_{#1}}}
\def\dfrac#1#2{\lower0.15ex\hbox{\large$\frac{#1}{#2}$}}
\def\E{{\mathbb{E}}}
\def\Deltait{{\mathit{\Delta}}}
\def\nicebreak{\vskip0pt plus50pt\penalty-300\vskip0pt plus-50pt }
\def\Mst{{\M(\svec,\tvec)}}
\def\Pst{{\P(\svec,\tvec)}}
\title{Asymptotic enumeration of sparse nonnegative\\
integer matrices with specified row and column sums}
\author{
Catherine Greenhill\\
\small School of Mathematics and Statistics\\[-0.5ex]
\small The University of New South Wales\\[-0.5ex]
\small Sydney NSW 2052, Australia\\[-0.5ex]
\small \tt csg@unsw.edu.au\\
\and
Brendan D. McKay\\
\small Research School of Computer Science\\[-0.5ex]
\small Australian National University\\[-0.5ex]
\small Canberra, ACT 0200, Australia\\
\small\tt bdm@cs.anu.edu.au
}
\date{\small Keywords:  asymptotic enumeration, non-negative integer matrices,
              contingency tables, switchings\\
MSC 2000: 05A16, 05C50, 62H17}
\begin{document}

\maketitle

\begin{abstract}
Let $\svec = (s_1,\ldots ,s_m)$ and $\tvec = (t_1,\ldots ,t_n)$
be vectors of nonnegative integer-valued functions of $m,n$ with
equal sum $S = \sum_{i=1}^m s_i = \sum_{j=1}^n t_j$.
Let $M(\svec,\tvec)$ be the number of $m\times n$ matrices
with nonnegative integer entries such that the
$i$th row has row sum $s_i$ and the $j$th column has
column sum $t_j$ for all~$i,j$.
Such matrices occur in many different settings, an important example
being the contingency tables (also called frequency
tables) important in statistics.
Define $s=\max_i s_i$ and $t=\max_j t_j$.
Previous work has established the asymptotic value of
$M(\svec,\tvec)$ as $m,n\to\infty$ with
$s$ and $t$ bounded (various authors independently,
1971--1974), and when
all entries of $\svec$ equal $s$, all entries of $\tvec$
equal $t$, and
$m/n,n/m,s/n\ge c/\log n$ for sufficiently large~$c$ (Canfield
and McKay, 2007).  In this paper we extend the sparse range to
the case $st=o(S^{2/3})$.  The proof in part follows a previous
asymptotic enumeration of 0-1 matrices under the same conditions
(Greenhill, McKay and Wang, 2006).
We also generalise the enumeration
to matrices over any subset of
the nonnegative integers that includes 0~and~1.

\emph{Note added in proof, 2011:}\  This paper appeared in
\emph{Advances in Applied Mathematics} {\bf 41} (2008), 459--481.
Here we fix a small gap in the proof of Lemma 5.1 and
make some other minor corrections.
We emphasise that the statements of our results have not changed.  
\end{abstract}

\section{Introduction}

Let $\svec = \svec(m,n) = (s_1,\ldots ,s_m)$
and $\tvec = \tvec(m,n) = (t_1,\ldots ,t_n)$
be vectors of nonnegative integers with
equal sum $S = \sum_{i=1}^m s_i = \sum_{j=1}^n t_j$.
Let $\Mst$ be the set of all $m\times n$ matrices
with nonnegative integer entries such that the
$i$th row has row sum $s_i$ and the $j$th column has
column sum $t_j$ for each~$i,j$.
Then define $M(\svec,\tvec) = \card{\Mst}$ to be the number of
such matrices.

Our task in this paper is to determine the asymptotic value of
$M(\svec, \tvec)$ as $m,n\to\infty$ under suitable conditions
on $\svec$ and $\tvec$.

The matrices $\Mst$ appear in many combinatorial contexts;
see Stanley~\cite[Chapter~1]{stanley} for a brief history.
A large body of statistical literature is devoted to them
under the name of \textit{contingency tables} or
\textit{frequency tables}; see \cite{DE, DGang} for
a partial survey.
In theoretical computer science there has been interest in
efficient algorithms for the problem of generating
contingency tables with prescribed margins at random,
and for approximately counting these tables.
See for example~\cite{BSY, DKM, morris}.

The history of the enumeration problem 
for nonnegative integer matrices is surveyed in~\cite{CMinteger},
while a history for the corresponding problem
for 0-1 matrices  is given in~\cite{GMW}.
Here we recall only the few previous exact results on
asymptotic enumeration for nonnegative integer
matrices.
Define $s=\max_i s_i$ and $t=\max_j t_j$.
The first non-trivial case $s_1=\cdots=s_m=t_1=\cdots=t_n=3$
was solved by Read~\cite{read} in 1958.
During the period 1971--74, 
this was generalised to bounded~$s,t$ by three independent groups:
B\'ek\'essy, B\'ek\'essy and Koml\'os~\cite{bekessy},
Bender~\cite{bender}, and Everett and Stein~\cite{everett},
under slightly different conditions.

In the case of denser matrices, the only precise asymptotics were
found by Canfield and McKay~\cite{CMinteger} in the case that
the row sums are all the same and the column sums are all the same.
Let $M(m,s;n,t)=M((s,s,\ldots,s),(t,t,\ldots,t))$, where the
vectors have length $m$, $n$, respectively, and $ms=nt$.

\begin{theorem}[{\cite[Theorem 1]{CMinteger}}]\label{CMmain}
Let $s=s(m,n)$, $t=t(m,n)$ be positive integers satisfying $ms=nt$.
Define $\lambda = s/n = t/m$.  Let $a,b>0$ be constants
such that $a+b<\frac12$.  Suppose that $m,n \rightarrow \infty$
in such a way that
\begin{equation} \label{Hyp}
\frac{(1+2\lambda)^2}{4\lambda(1+\lambda)}
 \biggl( 1 + \frac{5m}{6n} + \frac{5n}{6m} \biggr)
\le a \log n.
\end{equation}
Define $\Deltait(m,s;n,t)$ by
\begin{align*}\label{mainformula}
M(m,s;n,t) &=
     \frac{\displaystyle\binom{n+s-1}{s}^{\!m}\binom{m+t-1}{t}^{\!n}}
          {\displaystyle\binom{mn+\lambda mn-1}{\lambda mn}}\\
     &{\kern20mm}{}\times
       \Bigl(\frac{m+1}{m}\Bigr)^{(m-1)/2}
     \Bigl(\frac{n+1}{n}\Bigr)^{(n-1)/2}
     \exp\Bigl(-\dfrac12 + \frac{\Deltait(m,s;n,t)}{m+n}\Bigr).
\end{align*}
Then $\Deltait(m,s;n,t)=O(n^{-b})(m+n)$ as $m,n\to\infty$.\quad\qedsymbol
\end{theorem}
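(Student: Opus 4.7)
The plan is to apply the multidimensional saddle-point method to the generating-function representation
\[
M(m,s;n,t) = \frac{1}{(2\pi i)^{m+n}}\oint\!\cdots\!\oint \frac{\prod_{i,j}(1-x_iy_j)^{-1}}{\prod_i x_i^{s+1}\prod_j y_j^{t+1}}\,dx_1\cdots dx_m\,dy_1\cdots dy_n,
\]
integrated over product circles $\abs{x_i}=\rho$, $\abs{y_j}=\sigma$. This identity follows from expanding each factor $(1-x_iy_j)^{-1}$ as a geometric series, so that the exponent of $x_iy_j$ records the $(i,j)$ entry of the matrix, and then using Cauchy's theorem to pick off the coefficient of $\prod_i x_i^s\prod_j y_j^t$.

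By the full symmetry under permutation of rows and columns I would look for a saddle point on the diagonal $x_i\equiv\rho$, $y_j\equiv\sigma$; the saddle equations then reduce to $\rho\sigma = \lambda/(1+\lambda) + O(1/n)$, and since only the product is constrained one may take $\rho=\sigma$. Parametrising $x_i=\rho e^{i\theta_i}$, $y_j=\rho e^{i\phi_j}$ and expanding the log-integrand to second order in the angles, the zeroth-order contribution already reproduces the leading factor
\[
\frac{\binom{n+s-1}{s}^m\binom{m+t-1}{t}^n}{\binom{mn+\lambda mn-1}{\lambda mn}}
\]
of the target formula, which is the usual ``independence approximation'' for contingency tables. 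The Hessian of the quadratic form has a bipartite $\begin{pmatrix}A & B\\ B^\top & D\end{pmatrix}$ block structure in which $A$ and $D$ are rank-one perturbations of scalar matrices and $B$ is a scalar multiple of the all-ones matrix, so its determinant can be computed in closed form; one also needs to quotient out the approximate zero mode $\theta_i\mapsto\theta_i+c$, $\phi_j\mapsto\phi_j-c$ (which is exact only when $m=n$) before Gaussian integration. After this bookkeeping the Gaussian contributes exactly the factor $((m+1)/m)^{(m-1)/2}((n+1)/n)^{(n-1)/2}\exp(-\tfrac12)$.

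The main obstacle, and the reason for hypothesis~\eqref{Hyp}, is the tail estimate. One must prove that the contribution from the region where some $\abs{\theta_i}$ or $\abs{\phi_j}$ exceeds a threshold of order $n^{-1/2+\eps}$ is negligible relative to the central Gaussian, and that inside the small neighborhood the cubic and quartic Taylor terms contribute only an $O(n^{-b})(m+n)$ error in the exponent. The hard step is the first one: one is trying to control $m+n$ angular directions inside a product of $mn$ nearly-singular factors $(1-\rho^2 e^{i(\theta_i+\phi_j)})^{-1}$, so that careful pointwise bounds are needed on annular regions of the torus. The specific combination $(1+2\lambda)^2/(4\lambda(1+\lambda))\cdot(1+5m/(6n)+5n/(6m))$ appearing in \eqref{Hyp} encodes exactly the worst-case near-singularity strength (through $\lambda$) and the row/column asymmetry (through $m/n$); requiring it to be bounded by $a\log n$ is what keeps the tail exponentially small relative to the main term and thus allows the saddle-point method to close with the stated error bound.
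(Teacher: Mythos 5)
First, note that Theorem~\ref{CMmain} is not proved in this paper at all: it is quoted from \cite{CMinteger} and used only as background (hence the terminal \qedsymbol in its statement), so there is no internal proof to compare against; the relevant comparison is with the cited source. Your sketch does follow the strategy of that source: the Cauchy-integral representation of $M(m,s;n,t)$ as the coefficient of $\prod_i x_i^{s}\prod_j y_j^{t}$ in $\prod_{i,j}(1-x_iy_j)^{-1}$, a diagonal saddle with $\rho\sigma=\lambda/(1+\lambda)+O(1/n)$, and Gaussian integration over the angular variables with the near-zero mode $\theta_i\mapsto\theta_i+c$, $\phi_j\mapsto\phi_j-c$ handled separately. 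Your observation that this mode is exactly invariant only when $m=n$ (because of the $+1$ in the exponents $x_i^{s+1}$, $y_j^{t+1}$) is correct and is a real subtlety of the argument.

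As a proof, however, the proposal has a genuine gap, and it is precisely the step you flag as ``the hard step'': the tail estimate. Everything nontrivial in the statement --- the role of hypothesis \eqref{Hyp}, the specific quantity $(1+2\lambda)^2/(4\lambda(1+\lambda))\cdot(1+5m/(6n)+5n/(6m))$, and the conclusion $\Deltait=O(n^{-b})(m+n)$ with $a+b<\frac12$ --- lives entirely in that step, which you describe only qualitatively. Saying that \eqref{Hyp} ``encodes exactly the worst-case near-singularity strength'' restates what must be proved rather than deriving it; nothing in the sketch produces the coefficient $5/6$ or explains why $a+b<\frac12$ is the right threshold. Two smaller inaccuracies: the diagonal blocks $A$ and $D$ of the Hessian are exactly scalar multiples of the identity, since each term $-\log(1-x_iy_j)$ involves a single $\theta_i$ and a single $\phi_j$, so the mixed $\theta_i\theta_{i'}$ derivatives vanish (the rank-one corrections you mention appear only after one eliminates the degenerate direction); and the central value of the integrand does not by itself equal the binomial ratio in the theorem --- that ratio already incorporates the one-dimensional Gaussian corrections of the three binomial coefficients, so matching it against the $(m+n)$-dimensional Gaussian and the $((m+1)/m)^{(m-1)/2}((n+1)/n)^{(n-1)/2}e^{-1/2}$ factor is part of the bookkeeping that must be verified. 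In short: the method is the right one and matches the cited proof in outline, but the analytic core that justifies the hypothesis and the error bound is missing.
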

Canfield and McKay conjectured that in fact $0<\Deltait(m,s;n,t)<2$
for all $s,t\ge 1$.  The results in the present paper establish
that conjecture for sufficiently large $m,n$ in the case
$st=o\((mn)^{1/5}\)$. (See Corollary~\ref{Delta}.)

\bigskip

The main result in this paper is the asymptotic value of
$M(\svec,\tvec)$ for $st=o(S^{2/3})$.
Our proof uses the method of switchings in a number of different ways.  
In several aspects our approach is parallel to that
which provided our previous asymptotic
estimate of $N(\svec,\tvec)$, the number of 0-1 matrices in the class
$\Mst$. We now restate that result for convenience.
For any $x$, define $[x]_0 = 1$ and for a positive integer $k$,
$[x]_k = x(x-1)\cdots (x-k+1)$.  Also define
\[ S_k = \sum_{i=1}^m \,[s_i]_k,\qquad
   T_k = \sum_{j=1}^n \,[t_j]_k\]
for $k\geq 1$.  Note that $S_1 = T_1 = S$.

\begin{theorem}[{\cite[Corollary 5.1]{GMW}}]\label{main01}
~Let $\svec = \svec(m,n) = (s_1,\ldots ,s_m)$
and $\tvec = \tvec(m,n) = (t_1,\ldots ,t_n)$
be vectors of nonnegative integers with
equal sum $S = \sum_{i=1}^m s_i = \sum_{j=1}^n t_j$.
Suppose that $m,n\to\infty$, $S\to\infty$
and\/ $1\le st=o(S^{2/3})$.  Then
\begin{align*}
N(\svec,\tvec) 
    & = \frac{S!}{\prod_{i=1}^m s_i!\, \prod_{j=1}^n t_j!}
    \,\exp\biggl(-\frac{S_2T_2}{2S^2} - \frac{S_2T_2}{2S^3} +
    \frac{S_3T_3}{3S^3}
        - \frac{S_2T_2(S_2+T_2)}{4S^4} \\
           &{\kern 60mm} - \frac{S_2^2T_3+S_3T_2^2}{2S^4} + \frac{
             S_2^2T_2^2}{2S^5}
                    + O\biggl(\frac{s^3t^3}{S^2}\biggr)\biggr).
                      \quad\qedsymbol
\end{align*}
\end{theorem}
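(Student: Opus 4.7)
The plan is to follow the switching method of McKay and Wormald, adapted to bipartite configurations, as in~\cite{GMW}.

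First I would set up the bipartite configuration model: label the $s_i$ row half-edges at each row $i$ and the $t_j$ column half-edges at each column $j$, giving $S$ labels on each side. A \emph{configuration} is a perfect matching between the row and column half-edge labels, so the total count is $\ac{}=S!$. A short count shows that a matrix $A\in\Mst$ with entries $a_{ij}$ arises from exactly $\prod_i s_i!\,\prod_j t_j!\,/\prod_{i,j}a_{ij}!$ configurations; in particular every 0-1 matrix arises from $\prod_i s_i!\,\prod_j t_j!$ of them. Consequently
\[
N(\svec,\tvec) \;=\; \frac{\ac{0}}{\prod_i s_i!\,\prod_j t_j!} \;=\; \frac{S!}{\prod_i s_i!\,\prod_j t_j!}\cdot\frac{\ac{0}}{\ac{}},
\]
where $\C_0\subseteq\C$ is the set of simple configurations (those whose underlying matrix is 0-1). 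The task reduces to estimating the probability $\ac{0}/\ac{}$ that a uniformly random configuration is simple.

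Next I would stratify $\C=\bigcup_{k\ge 0}\C_k$ according to the number of excess edges in the underlying multigraph, and design switchings that transform a configuration in $\C_k$ into one in $\C_{k-1}$. A basic forward switching selects an excess edge at cell $(r_1,c_1)$ together with a disjoint ordinary edge $(r_2,c_2)$ and rewires them to $(r_1,c_2)$ and $(r_2,c_1)$, provided no new multiple edges are created. Enumerating forward switchings out of a configuration in $\C_k$ and backward switchings into a configuration in $\C_{k-1}$ yields an estimate of the form
\[
\frac{\ac{k}}{\ac{k-1}} \;=\; \frac{1}{k}\cdot\frac{S_2T_2}{2S^2}\cdot\bigl(1+\epsilon_k\bigr),
\]
and telescoping this from $k=0$ up to a cutoff of order $\log S$, with a crude tail bound beyond, produces $\ac{0}/\ac{}=\exp\bigl(-S_2T_2/(2S^2)+\cdots\bigr)$, which substituted above gives the formula of Theorem~\ref{main01}.

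The main obstacle will be extracting the higher-order corrections $S_3T_3/(3S^3)$, $-S_2T_2(S_2+T_2)/(4S^4)$, $-(S_2^2T_3+S_3T_2^2)/(2S^4)$ and $S_2^2T_2^2/(2S^5)$ with error only $O(s^3t^3/S^2)$. To capture them I would supplement the basic switching with auxiliary switchings tailored to tripled entries (removing two excess edges at a single cell in one step) and to pairs of doubled entries that share a row or a column, and I would carefully inventory the forbidden choices in both the forward and backward counts --- coincidences among $r_1,r_2,c_1,c_2$, and parallel edges that would be created. Under the hypothesis $st=o(S^{2/3})$, each exclusion contributes at most $O(s^2t^2/S^2)$ per switching, and iterating along the telescope accumulates to an overall exponent error of $O(s^3t^3/S^2)$. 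The real work is the bookkeeping needed to ensure that every term above the threshold $s^3t^3/S^2$ is identified and that nothing is silently lost or miscancelled; I would therefore compute systematically modulo $O(s^3t^3/S^2)$ from the start, following the template of the parallel 0-1 proof in the sparse regime.
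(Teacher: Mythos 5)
First, a point of orientation: this paper does not prove Theorem~\ref{main01} at all --- it is imported verbatim as \cite[Corollary~5.1]{GMW}, and the ``proof'' here is the citation. So the comparison is really against the proof in \cite{GMW}, whose machinery (pairing model, stratification by parallel classes, ratio switchings, summation lemmas) is restated and re-run in Sections~\ref{s:matrices}--\ref{s:pairings} of the present paper for the integer-matrix case. Your setup is the same as that machinery: the configuration/pairing count $\prod_i s_i!\prod_j t_j!/\prod_{i,j}a_{ij}!$ is correct and matches the weight $w(Q)$ used in Section~\ref{s:pairings}, and the identity $N(\svec,\tvec)=\ac{0,0}/\bigl(\prod_i s_i!\prod_j t_j!\bigr)$ is exactly the one the paper records. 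So the approach is right in outline.

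The gap is that the outline defers precisely the content of the theorem, and at one point the deferral hides a step that would fail as described. Concretely: (i)~Your single-parameter stratification by ``excess edges'' with ratio $\ac{k}/\ac{k-1}=\tfrac1k\cdot\tfrac{S_2T_2}{2S^2}(1+\epsilon_k)$ cannot by itself produce the terms $S_3T_3/(3S^3)$, $-S_2T_2(S_2+T_2)/(4S^4)$, etc.; \cite{GMW} needs the two-parameter stratification $\C_{d,h}$ by double \emph{and} triple pairs (after first showing multiplicities $\ge 4$ are negligible and that $d\le N_2$, $h\le N_3$ with high probability), together with ratio expansions to second order (the analogues of Lemmas~\ref{tswitch} and~\ref{dswitch}) and the nontrivial summation estimates of Lemmas~\ref{sumcor} and~\ref{sumcor2}; naive telescoping plus ``a crude tail bound beyond $\log S$'' loses the second-order terms. (ii)~More seriously, the relative errors in those ratios involve quantities like $s^2/S_2^2$, $t^2/T_2^2$ and $st/(S_2T_2)$, which are \emph{not} $O(s^3t^3/S^2)$ when $S_2$ or $T_2$ is small. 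This forces the substantial/insubstantial dichotomy: when $(S_2,T_2)$ is insubstantial the switching--telescoping argument breaks down and one must instead use Bonferroni inclusion--exclusion on doublets (as in Lemma~\ref{easycases2} here and \cite[Lemma~2.2]{GMW}). Your proposal makes no case split and assumes uniformly that ``each exclusion contributes at most $O(s^2t^2/S^2)$ per switching,'' which is false in the sparse-$S_2T_2$ regime. Without these two ingredients the plan does not close.
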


\bigskip

We now state our main result, which is the asymptotic value
of $M(\svec,\tvec)$ for sufficiently sparse matrices.  Note
that the answer is obtained by multiplying the expression
for $N(\svec,\tvec)$ from Theorem~\ref{main01} by a simple
adjustment factor.

\begin{theorem}\label{main}
~Let $\svec = \svec(m,n) = (s_1,\ldots ,s_m)$
and $\tvec = \tvec(m,n) = (t_1,\ldots ,t_n)$
be vectors of nonnegative integers with
equal sum $S = \sum_{i=1}^m s_i = \sum_{j=1}^n t_j$.
Suppose that $m,n\to\infty$, $S\to\infty$
and\/ $1\le st=o(S^{2/3})$.  Then
\begin{align*}
M(\svec,\tvec) &= N(\svec,\tvec)
  \exp\biggl(\frac{S_2T_2}{S^2} + \frac{S_2T_2}{S^3}
    + O\biggl(\frac{s^3t^3}{S^2}\biggr)\biggr) \\[1ex]
    & = \frac{S!}{\prod_{i=1}^m s_i!\, \prod_{j=1}^n t_j!}
    \,\exp\biggl(\frac{S_2T_2}{2S^2} + \frac{S_2T_2}{2S^3} +
    \frac{S_3T_3}{3S^3}
        - \frac{S_2T_2(S_2+T_2)}{4S^4} \\
           &{\kern 64mm} - \frac{S_2^2T_3+S_3T_2^2}{2S^4} + \frac{
             S_2^2T_2^2}{2S^5}
                    + O\biggl(\frac{s^3t^3}{S^2}\biggr)\biggr).
                      \end{align*}
\end{theorem}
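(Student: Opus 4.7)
The plan is to compute the ratio $M(\svec,\tvec)/N(\svec,\tvec)$ by a switching argument reducing the number of entries larger than $1$. For $k\ge 0$, let $\M_k\subset\Mst$ consist of the matrices with all entries in $\{0,1,2\}$ and exactly $k$ entries equal to $2$; in particular $\M_0=\N(\svec,\tvec)$. Write $\M^\ast=\bigsqcup_{k\ge 0}\M_k$ and $\M^\dagger=\Mst\setminus\M^\ast$ (matrices containing an entry $\ge 3$). The two pieces are handled separately: $|\M^\ast|/|\M_0|$ yields the leading exponential factor, while $|\M^\dagger|$ is shown to be absorbed into the stated error.

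To analyse $\M^\ast$ I would use the $2\times 2$ switching that replaces one double by two singles. Given $A\in\M_k$, pick an entry $A_{ij}=2$ and a position $(p,q)$ with $p\ne i$, $q\ne j$, $A_{pq}=1$, $A_{iq}=A_{pj}=0$, and perform
\[
A_{ij}\to 1,\quad A_{iq}\to 1,\quad A_{pj}\to 1,\quad A_{pq}\to 0.
\]
The reverse switching from $A'\in\M_{k-1}$ picks $1$-entries $(i,q)$ and $(p,j)$ sharing a further $1$-entry at $(i,j)$, subject to $A'_{pq}=0$. Estimating the numbers of forward and backward switchings (using $st=o(S^{2/3})$ to ensure that each "no-conflict" event $A_{iq}=A_{pj}=0$ or $A'_{pq}=0$ fails for only an $O(st/S)$ fraction of configurations) should give
\[
\frac{|\M_k|}{|\M_{k-1}|}=\frac{1}{k}\biggl(\frac{S_2T_2}{S^2}+\frac{S_2T_2}{S^3}+O\!\biggl(\frac{s^3t^3}{S^2}\biggr)\biggr),
\]
and summing the resulting Poisson-like series over $k\ge 0$ yields
\[
\frac{|\M^\ast|}{|\M_0|}=\exp\!\biggl(\frac{S_2T_2}{S^2}+\frac{S_2T_2}{S^3}+O\!\biggl(\frac{s^3t^3}{S^2}\biggr)\biggr).
\]
A simpler auxiliary switching that removes one unit from an entry of value $\ge 3$ shows $|\M^\dagger|=O(s^3t^3/S^2)\cdot M(\svec,\tvec)$, so $\M^\dagger$ contributes only to the claimed error. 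Substituting Theorem~\ref{main01} for $N(\svec,\tvec)$ then gives Theorem~\ref{main}.

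The principal technical obstacle is executing the switching-count estimate with sufficient precision to recover the sub-leading term $S_2T_2/S^3$ within an additive error of $O(s^3t^3/S^2)$. This requires careful bookkeeping of the boundary corrections: the excluded choices $p=i$ and $q=j$, the weak correlation between the mandatory $0$-entries at $(i,q)$ and $(p,j)$, and the occasional interference from other doubles sharing a row or column with $(i,j)$ or $(p,q)$. These issues mirror those addressed by Greenhill, McKay and Wang in the proof of Theorem~\ref{main01}, but are slightly cleaner here because each switching changes the number of doubles by exactly one and affects only four entries.
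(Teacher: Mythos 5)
There is a genuine gap in your reduction to matrices with entries in $\{0,1,2\}$. The claim that the set $\M^\dagger$ of matrices containing an entry $\ge 3$ satisfies $\card{\M^\dagger}=O(s^3t^3/S^2)\,M(\svec,\tvec)$ is false: only entries $\ge 4$ are negligible at this error level (that is the content of Lemma~\ref{no4}), whereas matrices with an entry equal to~$3$ occur with probability of order $S_3T_3/S^3$, which can be as large as order $s^2t^2/S\gg s^3t^3/S^2$. For instance, with $m=n$ and $s=t\sim S^{1/4}$ one has $S_3T_3/S^3\sim 1$ while $s^3t^3/S^2=S^{-1/2}$, so a constant fraction of $\Mst$ contains a $3$. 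These matrices multiply the ratio $M/N$ by a factor $\exp(S_3T_3/S^3+\cdots)$ that must be computed explicitly. The absence of an $S_3T_3$ term from the final ratio $M(\svec,\tvec)/N(\svec,\tvec)$ is not because $3$-entries are rare, but because their contribution cancels exactly against a second-order correction in the sum over the number of $2$-entries; your argument produces neither term and so cannot witness the cancellation.

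Relatedly, the asserted recursion $\card{\M_k}/\card{\M_{k-1}}=\frac1k\bigl(S_2T_2/S^2+S_2T_2/S^3+O(s^3t^3/S^2)\bigr)$ is not accurate enough to sum to the stated answer. The true ratio (Lemma~\ref{dswitch}, transported to matrices via the weight $2^d$) has the form $\frac{2A(d)}{d}\bigl(1-(d-1)B\bigr)(1+\delta_d)$, where $2A(d)$ contains additional terms such as $-S_3T_3/S^3$, $2S_2^2T_3/S^4$, $2S_3T_2^2/S^4$ and $(S_2^2T_2+S_2T_2^2)/S^4$, each of which can exceed $s^3t^3/S^2$, and where the genuinely $k$-dependent factor $\bigl(1-(d-1)B\bigr)$ contributes $-\tfrac12(2A)^2B$ to the logarithm of the sum. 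Only after the identity $2A-\tfrac12(2A)^2B=S_2T_2/S^2+S_2T_2/S^3-S_3T_3/S^3+O(s^3t^3/S^2)$ and multiplication by the triple-pair factor $\exp(S_3T_3/S^3)$ does the clean exponent emerge; summing your Poisson form would give the wrong constant even for the $\{0,1,2\}$-restricted count. The paper's architecture is: (i) matrix switchings to eliminate entries $\ge 4$ and to bound the numbers of $2$s and $3$s by $N_2$, $N_3$ (Lemmas~\ref{no4} and~\ref{notmany23}); then (ii) a passage to the pairing model with weights $2^d6^h$, where the two-parameter sum $\sum_{d,h}w(\C_{d,h})/w(\C_{0,0})$ is evaluated using the ratio and summation lemmas inherited from~\cite{GMW}. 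Your proposal would need both the triple-entry analysis and the finer form of the ratio to close.
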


\begin{proof}
The proof of this theorem is presented in Sections~\ref{s:matrices}
and~\ref{s:pairings}.  
First we show that the set of matrices in $\Mst$ 
with an entry greater than 3 forms a vanishingly small proportion
of $\Mst$.   We also show that it is very unusual
for an element of $\Mst$ to have a ``large''
number of entries equal to~2 or a ``large'' number of entries equal 
to~3, where ``largeness'' is defined in Section~\ref{s:matrices}.
We establish these facts using switchings
on the matrix entries.  This allows us to 
concentrate on matrices in $\Mst$ with no entries 
greater than 3 and not very many entries equal to 2 or 3.

We then proceed in Section~\ref{s:pairings}
to compare the number of these matrices with
the number $N(\svec,\tvec)$ of $\{ 0,1\}$-matrices with row sums
$\svec$ and column sums $\tvec$.  
We do this by adapting
the results from~\cite{GMW} used to prove Theorem~\ref{main01}.
These calculations are carried out
in the pairing model, which is described in
Section~\ref{s:pairings}.   Our theorem follows
on combining Lemmas~\ref{easycases2},~\ref{2.3.2} and
Corollary~\ref{dsum}.
\end{proof}

In the semiregular case where $s_i = s$ for $1\leq i\leq m$
and $t_j = t$ for $1\leq j\leq n$, Theorem~\ref{main} says
the following.

\begin{corollary}
Suppose that $m,n\to \infty$ and that $sm=tn=S$ for
nonnegative integer functions $s=s(m,n)$, $t=t(m,n)$
and $S=S(m,n)$.
If\/ $1\le st=o(S^{2/3})$ then
\begin{align*}
M(m,s; n,t) &=\\
&\kern-15mm\frac{S!}{(s!)^m\, (t!)^n}
\exp\biggl(\frac{(s-1)(t-1)}{2} 
 - \frac{(s-1)(t-1)(2st - s - t - 10)}{12S}
 + O\biggl(\frac{s^3t^3}{S^2}\biggr)\biggr).
 \quad\qedsymbol
\end{align*}
\end{corollary}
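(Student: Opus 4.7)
The plan is to derive the corollary as a direct specialization of Theorem~\ref{main} to the uniform case $s_i=s$, $t_j=t$ with $ms=nt=S$. First I would observe that the multinomial prefactor collapses immediately to
\[
\frac{S!}{\prod_i s_i!\,\prod_j t_j!} = \frac{S!}{(s!)^m\,(t!)^n}.
\]
Next I would evaluate each falling-factorial power sum in closed form: using $ms=S=nt$,
\[
S_k = m\,[s]_k = S\,[s-1]_{k-1}, \qquad T_k = n\,[t]_k = S\,[t-1]_{k-1},
\]
so in particular $S_2/S = s-1$, $T_2/S = t-1$, $S_3/S=(s-1)(s-2)$ and $T_3/S=(t-1)(t-2)$.

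Substituting into the exponent of Theorem~\ref{main}, the leading term $S_2T_2/(2S^2)$ evaluates to $(s-1)(t-1)/2$, which reproduces the first correction in the claimed formula. Each of the remaining six summands picks up an extra factor of $1/S$; after extracting the common factor $(s-1)(t-1)/S$, the remaining work reduces to verifying the polynomial identity
\[
\tfrac12 + \tfrac{(s-2)(t-2)}{3} - \tfrac{s+t-2}{4} - \tfrac{(s-1)(t-2)}{2} - \tfrac{(s-2)(t-1)}{2} + \tfrac{(s-1)(t-1)}{2} = -\,\tfrac{2st-s-t-10}{12},
\]
which I would check by expanding over a common denominator of $12$ and collecting powers of $s$ and~$t$ (two quick spot checks at $(s,t)=(1,1)$ and $(2,2)$ confirm the constant and the shape).

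Finally, the error term $O(s^3t^3/S^2)$ carries over verbatim, since the maxima $\max_i s_i$ and $\max_j t_j$ appearing in Theorem~\ref{main} coincide with the uniform values $s$ and $t$ here, and the hypothesis $1 \le st = o(S^{2/3})$ is identical. There is no real conceptual obstacle: the corollary is a purely algebraic consequence of the main theorem, and the only step that requires some care is correctly combining the six $1/S$-contributions into the single expression $-(s-1)(t-1)(2st-s-t-10)/(12S)$ without a sign or denominator error.
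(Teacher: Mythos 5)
Your proposal is correct and is exactly the intended argument: the paper gives no separate proof of this corollary (it is marked as following immediately from Theorem~\ref{main}), and the route is precisely the substitution $S_k = S\,[s-1]_{k-1}$, $T_k = S\,[t-1]_{k-1}$ followed by the polynomial identity you state, which does check out with the claimed value $-(2st-s-t-10)/12$.
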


For some applications the statement of Theorem~\ref{main}
is not very convenient.  In Section~\ref{s:alternative}
we will derive an alternative formulation,
very similar to one given for $N(\svec,\tvec)$ in~\cite{GMW}.
For $k=2,3$, define 
\begin{align*}
  \hat{\mu}_k &= \frac{mn}{S(mn+S)}\sum_{i=1}^m (s_i - S/m)^k\\[0.4ex]
  \hat{\nu}_k &= \frac{mn}{S(mn+S)}\sum_{j=1}^n (t_j - S/n)^k.
\end{align*}
To motivate the definitions, recall that $S/m$ is the mean value of $s_i$
and $S/n$ is the mean value of $t_j$, so these are scaled central moments.
We will prove Corollary~\ref{munu2}, stated in Section~\ref{s:alternative},
which has the following special case.

\begin{corollary}\label{nearreg}
Under the conditions of Theorem~\ref{main},
if $(1+\hat{\mu}_2)(1+\hat{\nu}_2)=O(S^{1/3})$ then
  \[
M(\svec,\tvec) =
  \frac{\displaystyle\prod_{i=1}^m\binom{n{+}s_i{-}1}{s_i}
   \prod_{j=1}^n\binom{m{+}t_j{-}1}{t_j}}
          {\displaystyle\binom{mn{+}S{-}1}{S}}\,
  \exp\biggl(  \dfrac12
  (1-\hat{\mu}_2)(1-\hat{\nu}_2)
              +O\biggl(\frac{st}{S^{2/3}}\biggr)\biggr).
              \quad\qedsymbol
\]
\end{corollary}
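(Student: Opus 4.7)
The plan is to derive Corollary~\ref{nearreg} by algebraic manipulation of the formula in Theorem~\ref{main}. The key step is to expand each binomial coefficient via
\[
\log\binom{N+K-1}{K} = K\log N - \log K! + \frac{[K]_2}{2N} - \frac{2[K]_3+3[K]_2}{12N^2} + O(K^4/N^3),
\]
which follows from Taylor-expanding $\log(1+j/N)$ inside $\sum_{j=0}^{K-1}\log(N+j)$. Applying this with $(N,K)=(n,s_i)$ and summing over $i$, with $(m,t_j)$ summed over $j$, and with $(mn,S)$ for the denominator, one obtains
\[
\log\frac{\prod_i\binom{n+s_i-1}{s_i}\prod_j\binom{m+t_j-1}{t_j}}{\binom{mn+S-1}{S}} = \log\frac{S!}{\prod_i s_i!\,\prod_j t_j!} + X,
\]
where $X = \dfrac{S_2}{2n} + \dfrac{T_2}{2m} - \dfrac{S(S-1)}{2mn}$ plus explicit higher-order corrections.

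Combining this with Theorem~\ref{main} reduces the task to verifying the algebraic identity
\[
E_1 - X = \tfrac12(1-\hat\mu_2)(1-\hat\nu_2) + O(st/S^{2/3}),
\]
where $E_1$ denotes the exponent in Theorem~\ref{main}. To see the leading part, note $\hat\mu_2 = \frac{mnV_s}{S(mn+S)}$ with $V_s = \sum_i(s_i - S/m)^2 = S_2 + S - S^2/m$; under the hypotheses (which force $S/(mn) \le s/n = o(S^{-1/3})$) we have $\hat\mu_2 \approx V_s/S$, so $1-\hat\mu_2 \approx S/m - S_2/S$ and similarly $1-\hat\nu_2 \approx S/n - T_2/S$. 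Multiplying,
\[
\tfrac12(1-\hat\mu_2)(1-\hat\nu_2) \approx \frac{S^2}{2mn} - \frac{T_2}{2m} - \frac{S_2}{2n} + \frac{S_2T_2}{2S^2},
\]
which matches precisely: the final term is the leading piece of $E_1$, and the other three cancel the leading piece of $X$.

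The main obstacle will be the bookkeeping required to confirm that every remaining term lies within $O(st/S^{2/3})$. These include the subleading terms of $E_1$ (namely $\frac{S_2T_2}{2S^3}$, $\frac{S_3T_3}{3S^3}$, $\frac{S_2T_2(S_2+T_2)}{4S^4}$, and the rest), the $1/N^2$ and $1/N^3$ corrections from the binomial expansion (such as $\frac{2S_3+3S_2}{12n^2}$), and the geometric-series corrections to $(1+S/(mn))^{-1}$ hidden in $\hat\mu_2$ and $\hat\nu_2$, as well as the cross term $\hat\mu_2\hat\nu_2$. The bounds $st = o(S^{2/3})$, which imply $s/n$, $t/m$ and $S/(mn)$ are each $o(S^{-1/3})$, together with the hypothesis $(1+\hat\mu_2)(1+\hat\nu_2) = O(S^{1/3})$ that controls the variances of the row and column sums, should make these bounds routine but tedious to verify term by term.
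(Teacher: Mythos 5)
Your overall strategy---expand the binomial coefficients, identify the leading-order cancellation with $\tfrac12(1-\hat\mu_2)(1-\hat\nu_2)$, and bound what remains---is the same route the paper takes (it passes through the exact intermediate identity of Corollary~\ref{munu2} and then checks sizes). Your binomial expansion and the leading-order algebra are correct. However, the final step as you describe it, namely ``confirm that every remaining term lies within $O(st/S^{2/3})$,'' rests on a false premise, and this is a genuine gap rather than mere tedium. Several of the terms you list are \emph{not} individually $O(st/S^{2/3})$ under the hypotheses. For example, in the semiregular case ($s_i\equiv s$, $t_j\equiv t$, so $\hat\mu_2=\hat\nu_2=0$ and your extra hypothesis holds trivially) one has $S_3T_3/(3S^3)=\Theta(s^2t^2/S)$ and $S_3/(6n^2)=\Theta(s^2t^2/S)$; taking $st$ of order $S^{1/2}$ (allowed, since $S^{1/2}=o(S^{2/3})$) makes each of these $\Theta(1)$, while $st/S^{2/3}=S^{-1/6}\to 0$. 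Likewise $S_2T_2(S_2+T_2)/(4S^4)=\Theta(st(s+t)/S)$ can tend to infinity. The corollary is only true because these large contributions cancel against one another: the $s^2t^2/S$ pieces of $S_3T_3/(3S^3)$, of $-(S_2^2T_3+S_3T_2^2)/(2S^4)$, of $S_2^2T_2^2/(2S^5)$ and of the $1/N^2$ binomial corrections sum to something small, and similar cancellations occur at orders $s^2t/S$ and $st^2/S$.

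Exhibiting those cancellations is the real content of the proof, and it is exactly what the paper's Corollary~\ref{munu2} does: it regroups all the remaining terms into products of \emph{centered} moments, such as $(1-\hat\mu_2)(1-\hat\nu_2)(3-\hat\mu_2\hat\nu_2)/(4S)$ and $(1-3\hat\mu_2^2+2\hat\mu_3)(1-3\hat\nu_2^2+2\hat\nu_3)/(12S)$, and only these grouped quantities are then bounded one at a time, using $\hat\mu_2\le s$, $\abs{\hat\mu_3}\le s\hat\mu_2$, $\hat\nu_2\le t$, $\abs{\hat\nu_3}\le t\hat\nu_2$ together with the hypothesis $(1+\hat\mu_2)(1+\hat\nu_2)=O(S^{1/3})$. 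Your plan needs that regrouping step (or an equivalent exact identity in terms of centered moments) made explicit; without it, the term-by-term bound you propose fails in regimes that the theorem is required to cover.
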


Corollary~\ref{nearreg} has an instructive interpretation.
Following~\cite{CMinteger},
we write $M(\svec,\tvec)= M P_1P_2 E$, where
\begin{align*}
M &= \binom{mn{+}S{-}1}{S},\quad
P_1 = M^{-1} \prod_{i=1}^m \binom{n{+}s_i{-}1}{s_i},\quad
P_2 = M^{-1} \prod_{j=1}^n \binom{m{+}t_j{-}1}{t_j},\\
E &= \exp\biggl(\dfrac12
  (1-\hat{\mu}_2)(1-\hat{\nu}_2)
              +O\biggl(\frac{st}{S^{2/3}}\biggr)\biggr).
\end{align*}
Clearly, $M$ is the number of $m\times n$ nonnegative
matrices whose entries sum to $S$.  In the uniform
probability space on these $M$ matrices, $P_1$ is the 
probability of the event that the row sums are given by $\svec$ 
and $P_2$ is the probability of the event that the column sums 
are given by $\tvec$.
The final quantity $E$ is thus a correction to account for
the non-independence of these two events.

Finally, in
Section~\ref{s:restricted} we show how to generalise 
Theorem~\ref{main} to matrices whose entries are restricted
to any subset of the natural numbers that includes 0 and 1.

A note on our usage of the $O(\,)$ notation in the following is in order.
Given a fixed
function $f(S) = o(S^{2/3})$, and any quantity $\phi$ that depends on
any of our variables,  $O(\phi)$ denotes any quantity whose absolute
value is bounded above by $\abs{c\phi}$ for some constant $c$ that 
depends on~$f$
\textit{and nothing else}, provided that $1\le st\le f(S)$.

\medskip

\emph{Note added in proof, 2011:}\ This version of the paper
the same as the journal version~\cite{GM08}, except as follows:
\begin{itemize}
\item 
Theorem~\ref{t:FM}, a statement of a special case of a more
general result from~\cite{FM}, was previously incomplete.  The
first inequality in (\ref{swine}) need only hold if $v$ is a sink,
but this condition was absent in~\cite{GM08}.
\item A note has been added at the end of the proofs of Lemmas~\ref{tswitch}
and~\ref{dswitch}, clarifying why it is valid to apply~\cite[Lemma 4.6]{GMW}
and~\cite[Lemma 4.8]{GMW} with a possibly larger value of $N_2$,
$N_3$ than used in~\cite{GMW}.
\item The proof of Lemma~\ref{restricted} has been changed to fix a small gap.
The old proof did not guarantee that $n_1(Q) = S - o(S)$
when $Q\in\mathcal{M}^-\setminus \mathcal{M}^\ast$.
The definition of $\mathcal{M}^-$ has changed and a new switching
argument is given to correct this.
\item We added a reference to the journal version~\cite{GM08} of this
paper.
\end{itemize}
Note that none of the statements of our own results from~\cite{GM08}
have changed.

\section{Switchings on matrices}\label{s:matrices}

In this section we will show
that the condition $st=o(S^{2/3})$ implies that most matrices
have no entries greater than~3.  We also find bounds on the
number of entries equal to~2 or~3.
Our tool will be the method of switchings, which we will
analyse using results of
Fack and McKay~\cite{FM} from which we
will distill the following special case.

\begin{theorem}\label{t:FM}
Let $G=(V,E)$ be a finite simple acyclic directed graph, with
each $v\in V$ being associated with a finite set~$C(v)$,
these sets being disjoint.
Suppose that $\S$ is a multiset of ordered pairs 
such that for each $(Q,R)\in \S$ there is an edge
$vw\in E$ with $Q\in C(v)$ and $R\in C(w)$. 
Further suppose that
$a,b : V\to\Reals$ are positive functions such
that, for each $v\in V$,
\begin{align}\label{swine}
  \begin{split}
  \Card{\{(Q,R)\in\S \mid Q\in C(v)\}} &\ge a(v)\,\card{C(v)}\,\,\,
   \text{ if $v$ is not a sink,}
  \\[0.5ex]
  \Card{\{(Q,R)\in\S \mid R\in C(v)\}} &\le b(v)\,\card{C(v)}\,,
  \end{split}
\end{align}
where the left hand sides are multiset cardinalities.
Let\/ $\emptyset\ne Y\subseteq V$. Then there is a directed
path $v_1,v_2,\ldots,v_k$ in $G$, where $v_1\in Y$ and
$v_k$ is a sink, such that
\begin{equation}\label{swopt}
  \frac{\sum_{v\in Y}\card{C(v)}}{\sum_{v\in V}\card{C(v)}}
  \le
  \frac{\sum_{v_i\in Y }N(v_i)}
       {\sum_{1\le i\le k}N(v_i)},
\end{equation}
where $N(v_i)$ is defined by
\begin{align*}
       N(v_1) &= 1,\\
       N(v_i) &= \frac{a(v_1)\cdots a(v_{i-1})}
                      {b(v_2)\cdots b(v_i)}& (2\le i\le k).
\end{align*}
\end{theorem}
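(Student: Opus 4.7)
My plan is to first convert the pair-counting hypotheses~\eqref{swine} into a single inequality on the cardinalities $\card{C(v)}$. Since every pair $(Q,R)\in\S$ lies on some edge $vw\in E$ with $Q\in C(v)$ and $R\in C(w)$, the pairs with $Q\in C(v)$, of which there are at least $a(v)\card{C(v)}$, may be partitioned by which successor $w$ of $v$ contains $R$, and each block is at most $b(w)\card{C(w)}$. Thus for every non-sink $v\in V$,
\[
  a(v)\,\card{C(v)} \;\le\; \sum_{w\,:\,vw\in E} b(w)\,\card{C(w)}.
\]
Call this the \emph{propagation inequality}; it is the only consequence of~\eqref{swine} I will need, and the defining recursion $N(v_i)/N(v_{i-1}) = a(v_{i-1})/b(v_i)$ is exactly the multiplier needed to telescope it along a directed path.

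My plan is then to proceed by induction from the sinks of $G$ upward, which is well defined because $G$ is acyclic. For each $v\in V$ let $R(v)$ denote the maximum, over directed paths $v = v_1, v_2, \dots, v_k$ ending at a sink, of the weighted $Y$-density $\sum_{v_i \in Y} N(v_i) / \sum_i N(v_i)$ with $N(v_1)=1$. The theorem asserts that $\sum_{v\in Y}\card{C(v)} / \sum_{v\in V}\card{C(v)} \le \max_{v_0\in Y} R(v_0)$. I would first restrict to the subgraph induced by vertices reachable from $Y$, observing that discarding unreachable vertices only increases the left-hand side (the numerator is unchanged while the denominator shrinks) and does not affect the right-hand side, so it suffices to prove the bound there. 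On this subgraph I would establish the stronger inductive claim that for every $v$,
\[
  \sum_{u\text{ reachable from }v}\card{C(u)}\,\bigl(\iota_Y(u) - R(v)\bigr) \;\le\; 0,
\]
where $\iota_Y$ is the $\{0,1\}$-indicator of $Y$. The base case at a sink is immediate. In the inductive step at a non-sink $v$, the propagation inequality redistributes $\card{C(v)}$ across its successors with weights $a(v)/b(w)$, and combined with the inductive bounds at each successor --- weighted by the matching $N$-factors --- collapses to the desired inequality at~$v$. Taking the maximising $v_0 \in Y$ then yields~\eqref{swopt} along the path realising $R(v_0)$.

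\textbf{Main obstacle.} The central difficulty is aligning the additive content of the propagation inequality --- which distributes $a(v)\card{C(v)}$ across \emph{all} successors of $v$ --- with the multiplicative bookkeeping of the weights $N(v_i)$, which describe a \emph{single} chosen path. A naive greedy rule, for example always moving to the successor $w$ maximising $b(w)\card{C(w)}$, gives only a per-step estimate and does not in general realise the correct $R(v)$, because $N(v_i)$ depends multiplicatively on all earlier choices. The inductive definition of $R(v)$ as a maximum over paths is what finesses this: the recursion picks the best first step using the already-computed $R(w)$'s, and the propagation inequality supplies exactly the linear combination needed to close the induction. I expect this alignment, together with the careful handling of vertices unreachable from $Y$, to be the most delicate part of the argument; the acyclicity of $G$ is essential, as it is what makes the induction from the sinks well-defined.
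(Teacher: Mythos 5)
Your opening reduction is where the argument breaks. The per-vertex propagation inequality $a(v)\,\card{C(v)}\le\sum_{w:vw\in E}b(w)\,\card{C(w)}$ is a genuine consequence of \eqref{swine}, but it is \emph{not} ``the only consequence you will need'': the conclusion of the theorem can fail for data satisfying all the propagation inequalities. Concretely, take the diamond $V=\{v,w,x,y\}$ with edges $vw$, $vx$, $wy$, $xy$, let $Y=\{v\}$, set $a\equiv 1$ on the non-sinks and $b\equiv 1$, and put $\card{C(v)}=2$, $\card{C(w)}=\card{C(x)}=\card{C(y)}=1$. Every propagation inequality holds ($2\le 1+1$ at $v$, and $1\le 1$ at $w$ and at $x$), yet the left side of \eqref{swopt} is $2/5$, while both paths from $v$ to the sink $y$ have $N\equiv 1$ and give a right side of $1/3$. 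So no argument using only the propagation inequalities can prove \eqref{swopt}. What you have discarded is the joint in-flow constraint: writing $P_{uw}$ for the number of pairs of $\S$ from $C(u)$ to $C(w)$, the hypotheses give $\sum_{u:uw\in E}P_{uw}\le b(w)\,\card{C(w)}$ as well as $\sum_{w:vw\in E}P_{vw}\ge a(v)\,\card{C(v)}$; in the diamond these force $a(w)\,\card{C(w)}+a(x)\,\card{C(x)}\le b(y)\,\card{C(y)}$, which the numbers above violate, so that configuration is not realizable by any $\S$. A correct proof must carry this edge-level flow information, not just its vertex-level shadow.

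A second, related defect is that your inductive claim over the set of vertices reachable from $v$ does not telescope on a DAG that is not a tree: in the step at $v$ you combine the inductive bounds at the successors $w$, but a vertex $u$ reachable from $v$ through two different successors is then counted twice on one side and once on the other, and since its coefficient $\iota_Y(u)-R(v)$ can have either sign this is not a one-way error (the same diamond exhibits the problem). This is precisely the difficulty you flag as the main obstacle, but defining $R(v)$ as a maximum over paths does not resolve it. For what it is worth, the paper does not prove this theorem either: it quotes it as a special case of Theorems 1 and 2 of Fack and McKay \cite{FM}, whose argument works with the edge quantities $P_{uw}$ directly rather than with a per-vertex inequality.
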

\begin{proof}
 This follows from Theorems~1 and~2 of~\cite{FM}.
\end{proof}

\medskip

For $D\ge 2$, a \textit{$D$-switching} is described by the sequence
\[ 
  \(Q ; (i_0,j_0),(i_1,j_1),\ldots, (i_D,j_D)\)
\]
where $Q$ is a matrix in $\Mst$
and $(i_0,j_0),\, (i_1,j_1),\ldots, (i_D,j_D)$ is a $(D{+}1)$-tuple
of positions such that
\begin{itemize}\itemsep=0pt
\item the rows $i_0,\ldots, i_D$ are all distinct and the columns
$j_0,\ldots, j_D$ are all distinct;
\item there is a $D$ in position $(i_0,j_0)$ of $Q$;
\item the entries in positions $(i_\ell,j_\ell)$ of $Q$ are 
  not equal to~0 or $D+1$, for $1\leq \ell\leq D$;
\item there is a 0 in position $(i_\ell,j_0)$ and position
$(i_0,j_\ell)$ of $Q$ for $1\leq \ell\leq D$.
\end{itemize}
This $D$-switching transforms $Q$ into a matrix 
$R\in\Mst$ by acting on the $(D{+}1)\times (D{+}1)$
submatrix consisting of rows $(i_0,\ldots, i_D)$
and columns $(j_0,\ldots, j_D)$ as follows:
\[  Q = 
      \begin{pmatrix}
           & \\[-1ex]
           & D & 0   & 0    & \cdots  & 0 & \\
           & 0 & q_1 & \\
           & 0 &     & q_2 \\
       & \vdots &     &      & \ddots \\
          &  0 &     &      &        & q_D \\[-1ex]
          & 
       \end{pmatrix}
    ~\longmapsto~
       \begin{pmatrix}
            & \\[-1ex]
           & 0 & 1       & \!1       & \cdots  & \!1 & \\
           & 1 & q_1{-}1 & \\
           & 1 &         & \!q_2{-}1 \\
      & \vdots &         &         & \ddots \\
          &  1 &         &         &        & \!q_D{-}1\\[-1ex]
          &
       \end{pmatrix}
     = R\,.
\]
Matrix entries not shown can have any values and are unchanged by
the switching operation.
Notice that the $D$-switching preserves all row and column sums and 
reduces the number of entries equal to $D$ by at least 1 and
at most $D+1$.  The number of entries greater than~$D$ is unchanged.

\smallskip
A \textit{reverse $D$-switching}, which undoes a
$D$-switching (and vice-versa),
is described by a sequence
$\(R;(i_0,j_0),\ldots, (i_D,j_D)\)$
where $R\in\Mst$ and $(i_0,j_0),\, (i_1,j_1),\ldots, (i_D,j_D)$ is a
$(D{+}1)$-tuple of positions such that
\begin{itemize}\itemsep=0pt
\item the rows $i_0,\ldots, i_D$ are all distinct and the columns
$j_0,\ldots, j_D$ are all distinct;
\item there is a zero in position $(i_0,j_0)$ of $R$;
\item the entries in positions $(i_\ell,j_\ell)$ of $R$ are 
  not equal to~$D$, for $1\leq \ell\leq D$;
\item there is a 1 in position $(i_\ell,j_0)$ and position
$(i_0,j_\ell)$ of $R$ for $1\leq \ell\leq D$.
\end{itemize}

\nicebreak
\begin{lemma}\label{ab}
Let $D\ge 2$ and let $Q\in\Mst$ have at least $K\ge 2st$
non-zero entries that are not greater than~$D$, and at least
$J$ entries equal to~$D$.
Then there are at least $J(K-2st)^D$ $D$-switchings
and at most $S_D T_D$ reverse $D$-switchings that apply to~$Q$.
\end{lemma}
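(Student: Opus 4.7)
The plan is to prove each bound by direct counting of the ordered tuples defining the switchings.

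For the upper bound on reverse $D$-switchings, fix an anchor $(i_0,j_0)$ with $Q[i_0,j_0]=0$. The ordered $D$-tuple of distinct rows $(i_1,\ldots,i_D)$ satisfying $Q[i_\ell,j_0]=1$ has at most $[t_{j_0}]_D$ choices, since column $j_0$ contains at most $t_{j_0}$ non-zero entries; symmetrically, the ordered column tuple has at most $[s_{i_0}]_D$ choices. Dropping the constraint $Q[i_\ell,j_\ell]\neq D$ (which can only decrease the count) and relaxing the sum over anchors to all pairs $(i_0,j_0)$ gives at most
\[
  \sum_{i_0}[s_{i_0}]_D\sum_{j_0}[t_{j_0}]_D = S_D\,T_D.
\]

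For the lower bound on $D$-switchings, first choose an anchor $(i_0,j_0)$ with $Q[i_0,j_0]=D$, contributing at least $J$ choices. For each $\ell=1,\ldots,D$ in turn, the position $(i_\ell,j_\ell)$ is chosen from those satisfying $Q[i_\ell,j_\ell]\in\{1,\ldots,D\}$ (a subset of the allowed entries ``not $0$ or $D+1$''), $Q[i_\ell,j_0]=0$, $Q[i_0,j_\ell]=0$, and with row and column distinct from the earlier selections. Of the $K$ positions with entry in $\{1,\ldots,D\}$, at most $st$ lie in rows $i$ with $Q[i,j_0]\neq0$, since column $j_0$ has at most $t_{j_0}\le t$ non-zero rows, each containing at most $s$ non-zero entries; symmetrically at most $st$ lie in columns $j$ with $Q[i_0,j]\neq0$. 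These two constraints automatically exclude the anchor row $i_0$ and anchor column $j_0$ from the candidate set.

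The main obstacle is the bookkeeping that remains: the exclusions needed to ensure row and column distinctness from the earlier pairs $(i_k,j_k)$, $k<\ell$, contribute at most $(\ell-1)(s+t)$ additional forbidden positions at step $\ell$, and these must be folded into the $2st$ allowance using the hypothesis $K\ge 2st$. Once this is reconciled, at least $K-2st$ choices remain per step, and multiplying over $\ell=1,\ldots,D$ yields the claimed lower bound of $J(K-2st)^D$. In the asymptotic regime relevant to the main theorem ($D$ bounded, $st=o(S^{2/3})$, and $K$ of order~$S$), these lower-order corrections are routinely dominated by the leading $K$ term, so this simplified form of the bound is all that is needed downstream.
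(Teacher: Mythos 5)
Your upper bound on reverse $D$-switchings is correct and is essentially the paper's argument (the paper organises it by choosing the ordered row- and column-sequences directly, which determine the anchor; you anchor first and then sum, arriving at the same $S_DT_D$). The lower bound, however, has a genuine gap, and you have located it yourself without closing it. With your exclusion classes --- at most $st$ candidate positions in rows $i$ with $Q[i,j_0]\neq 0$ and at most $st$ in columns $j$ with $Q[i_0,j]\neq 0$ --- the budget of $2st$ is already exhausted before you account for distinctness from the earlier pairs $(i_k,j_k)$, which costs up to $(\ell-1)(s+t)$ more forbidden positions at step $\ell$. As written, your count only gives at least $K-2st-(D-1)(s+t)$ choices per step, not $K-2st$. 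The hypothesis $K\ge 2st$ does not ``fold in'' anything; it merely makes the target quantity nonnegative. Asserting that the reconciliation works out is not a proof, and the appeal to ``$D$ bounded'' is unavailable: in Lemma~\ref{no4} the lemma is applied for every $D$ up to $\Deltait=\min\{s,t\}$, which is unbounded.

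The missing idea is a sharper count of the first two exclusion classes, exploiting the entry $D$ at the anchor. Since row $i_0$ has row sum at most $s$ and already carries $D$ at $(i_0,j_0)$, there are at most $s-D$ columns $j\neq j_0$ with $Q[i_0,j]\neq 0$, hence at most $(s-D)t$ forbidden candidates from that class; symmetrically at most $(t-D)s$ from the other, plus $1$ for the anchor itself and at most $(D-1)(s+t-2)$ for distinctness. The total is
\[
 1+(s-D)t+(t-D)s+(D-1)(s+t-2)=2st-(s+t)-2D+3\le 2st,
\]
using $s,t\ge D\ge 2$. The savings of $D(s+t)$ from the refined first two classes is exactly what absorbs the distinctness exclusions, and this is how the paper obtains the clean per-step bound $K-2st$ uniformly in $D$.
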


\begin{proof}
First consider $D$-switchings.
We want a lower bound on the number of
$(D{+}1)$-tuples $(i_0,j_0),\ldots, (i_D,j_D)$
of indices where a $D$-switching may be performed.
There are at least~$J$ ways to choose the position $(i_0,j_0)$.
Then we can choose the remaining positions one at a time, avoiding
choices which violate the rules.  The choice of the last position
$(i_D,j_D)$ is the most restricted, so we bound that.
By assumption, there are at least $K$ nonzero entries in $Q$
that are not greater than~$D$.
Of these we must exclude the entry in position $(i_0,j_0)$
as well as entries in the same column as a nonzero entry in row
$i_0$ other than column~$j_0$ (at most $(s-D)t$ positions),
entries in the same row as a nonzero
entry in column~$j_0$ other than row $i_0$ (at most $(t-D)s$ positions), and entries
in row $i_\ell$ or column $j_\ell$ for $1\leq \ell\leq D-1$ (at most
$(D-1)(s+t-2)$ positions).
Overall, we can choose position $(i_D,j_D)$ in at least
\[ K - 1 - (s-D)t - (t-D)s - (D-1)(s+t-2) \geq K - 2st \]
ways, and as we noted this also applies to
each of the less restricted positions $(i_\ell,j_\ell)$, where
$1\leq \ell < D$.  Hence at most $J(K - 2st)^D$
$D$-switchings involve~$Q$.

Next consider reverse $D$-switchings.
An ordered sequence of $D$ entries in the same row which
equal 1 may be chosen in at most $S_D$ ways, and an ordered sequence
of $D$ entries in the same column which equal 1 may be chosen in at
most $T_D$ ways.  Some of these choices will not give a legal position
for a reverse $D$-switching, but $S_DT_D$ is certainly an
upper bound.
\end{proof}

\medskip
Our first application of switchings will be to show that only
a vanishing fraction of our matrices have any entries greater
than~3.
For $j\ge 0$ and $D\ge 2$, let $\M_D(j)$ be the set of all
matrices in $\Mst$ with exactly $j$ entries equal to $D$ and none
greater than~$D$. Define $\M_D({>}0)=\bigcup_{j>0}\M_D(j)$,
and note that $\M_{D+1}(0) = \M_D(0)\cup\M_D({>}0)$.

\begin{lemma}\label{no4}
Suppose that $1\leq st=o(S^{2/3})$.
Let $U_1=U_1(\svec,\tvec)$ be the set of all matrices in $\Mst$
which contain an entry greater than~3.  Then
\[ 
 \card{U_1}/M(\svec,\tvec) = O(s^3t^3/S^2).
 \vadjust{\vskip-1ex}
\]
\end{lemma}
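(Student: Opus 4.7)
The plan is to apply Lemma~\ref{ab} together with Theorem~\ref{t:FM} once for each $D\ge 4$ to bound
\[
\eps_D:=\frac{\card{\M_D({>}0)}}{\card{\M_{D+1}(0)}},
\]
and then chain these bounds across $D$. Since $\M_{D+1}(0)=\M_D(0)\cup\M_D({>}0)$ disjointly, the estimate $\card{\M_D(0)}\ge(1-\eps_D)\card{\M_{D+1}(0)}$ iterates from $D=4$ up to $D^*=\min(s,t)$ (beyond which no matrix in $\Mst$ has an entry of value $D^*{+}1$, so $\M_{D^*+1}(0)=\Mst$), yielding
\[
\frac{\card{\M_4(0)}}{\card{\Mst}}\;\ge\;\prod_{D=4}^{D^*}(1-\eps_D)\;\ge\;1-\sum_{D\ge 4}\eps_D,
\]
so that $\card{U_1}/\card{\Mst}\le\sum_{D\ge 4}\eps_D$.

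For fixed $D\ge 4$, set up the DAG of Theorem~\ref{t:FM} on the finite vertex set $V=\{0,1,\ldots,\lfloor S/D\rfloor\}$ with classes $C(j)=\M_D(j)$ and a directed edge from $j$ to each $j'$ with $\max(0,j-D-1)\le j'\le j-1$; take $\S$ to be the multiset of pairs $(Q,R)$ arising from $D$-switchings. For any $Q\in\M_D(j)$ the entries other than the $j$ copies of $D$ are each at most $D-1$, so the number of nonzero entries satisfies $K\ge j+(S-jD)/(D-1)=(S-j)/(D-1)\ge S/D$. Combined with $D\le\sqrt{st}$ and $st=o(S^{2/3})$, this gives $2stD\le 2(st)^{3/2}=o(S)$, so $K-2st\ge S/(2D)$ for $S$ sufficiently large. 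Lemma~\ref{ab} then supplies the hypotheses~\eqref{swine} with
\[
a(j)=j\bigl(S/(2D)\bigr)^{D},\qquad b(j)=S_DT_D.
\]

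Apply Theorem~\ref{t:FM} with $Y=\{j\ge 1\}$; the unique sink is $0$, so along any path produced by the theorem we have $v_k=0$ and $v_i\ge 1$ for $i<k$. Writing $r=(S/(2D))^D/(S_DT_D)$, the recurrence $N(v_{i+1})=v_irN(v_i)$ together with $v_i\ge 1$ gives $N(v_i)/N(v_k)\le r^{i-k}$, and summing a geometric tail yields $\sum_{i<k}N(v_i)\le N(v_k)/(r-1)$, whence
\[
\eps_D\;\le\;\frac{1}{r}\;=\;\frac{(2D)^D\,S_DT_D}{S^D}.
\]
The crude bounds $S_D\le s^{D-1}S$ and $T_D\le t^{D-1}S$ give $\eps_D\le(2D)^D(st)^{D-1}/S^{D-2}$. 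For $D=4$ this is $O(s^3t^3/S^2)$, and a short calculation shows $\eps_{D+1}/\eps_D\le 2e(D+1)(st)/S\le 4e(st)^{3/2}/S=o(1)$ uniformly in $D\le\sqrt{st}$, so the series is geometric with sum dominated by its first term and $\sum_{D\ge 4}\eps_D=O(s^3t^3/S^2)$.

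The main obstacle is the Theorem~\ref{t:FM} setup: once the DAG and the parameters $a,b$ are extracted from Lemma~\ref{ab}, one must check that $r$ grows fast enough that the path sum is geometric and dominated by its final term, and that the cross-$D$ sum telescopes into the $D=4$ contribution. Both rest on the single inequality $(st)^{3/2}=o(S)$ implied by the hypothesis $st=o(S^{2/3})$; the rest is bookkeeping, since the telescoping across $D$ is automatic from the nested inclusions $\M_D(0)\subseteq\M_{D+1}(0)$.
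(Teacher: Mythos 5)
Your proof is correct and follows essentially the same route as the paper's: the same chain of $D$-switchings for $4\le D\le\min(s,t)$, the same application of Lemma~\ref{ab} and Theorem~\ref{t:FM} with $a(v_j)$ proportional to $j\bigl(S/D-O(st)\bigr)^D$ and $b=S_DT_D$, and the same uniform geometric decay of the resulting bounds in $D$, so that the sum over $D$ is dominated by the $D=4$ term $O(s^3t^3/S^2)$. The only differences are cosmetic: you use a geometric-series bound along the path (which requires noting $r>1$, a fact your later estimates do supply) where the paper uses a max-ratio bound, and you use the slightly cruder quantity $S/(2D)$ in place of the paper's $S/D-2st$.
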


\begin{proof}
The largest possible entry of a matrix in $\Mst$
is $\Deltait = \min\{ s,t\}$. We will apply
Theorem~\ref{t:FM} to successively bound the possibility that the
maximum entry is~$D$, for $D=\Deltait,\Deltait-1,\ldots,4$.

\medskip

Fix $D$ with $4\le D\le\Deltait$.  
Define a directed graph $G=(V,E)$ with vertex set 
$V = \{ v_0, v_1, v_2,\ldots\,\}$
and edge set  $E=\{v_jv_i \mid  j-D-1\leq i\leq j-1\}$.
Associate each $v_i$ with the set $C(v_i) = \M_D(i)$.
Define $\S$ to be the set of pairs $(Q,R)$ related by a
$D$-switching, where $Q\in v_j, R\in v_i$ for some $v_jv_i\in E$.
Define $Y=\{ v_1, v_2,\ldots\,\}\subseteq V$.
Note that $S_DT_D>0$ since $D\le\Deltait$.

We can now use Theorem~\ref{t:FM} to bound
\[
 \frac{\card{\M_D({>}0)}}{\card{\M_{D+1}(0)}} = 
 \frac{\sum_{v\in Y}\card{C(v)}}{\sum_{v\in V}\card{C(v)}},
\]
once we have found
positive functions $a,b:V\to\Reals$ satisfying~\eqref{swine}.
These are provided by Lemma~\ref{ab} with $J=j$ and
$K=S/D$, the latter being clear since there are no entries
greater than $D$ and the total of all the entries is~$S$.
We have $S/D > 2st$ since 
$D\le\Deltait\le (st)^{1/2}$.
Thus we can take $a(v_j) = j(S/D - 2st)^D$ and
$b(v_j) = S_DT_D$.

Theorem~\ref{t:FM} tells us that, unless $\M_D({>}0)=\emptyset$,
there is a directed path $v_{t_1},v_{t_2},\ldots,v_{t_q}$,
where $q>1$ and $t_1>t_2>\cdots>t_q=0$ (since $v_0$ is
the only sink) such that~\eqref{swopt} holds.

Hence, using the values of $N$ as given in Theorem~\ref{t:FM} 
we have
\begin{align*}
 \frac{\card{\M_D({>}0)}}{\card{\M_{D+1}(0)}} &\leq
    \frac{N(v_{t_{q-1}}) + \cdots + N(v_{t_1})}
            {N(v_{t_q}) + \cdots + N(v_{t_2})} \\     
   &\leq \max_{1\leq i\leq q}\, \frac{N(v_{t_{i-1}})}{N(v_{t_i})} \\
   &= \max_{1\leq i\leq q}\, \frac{b(\M_D(t_i))}{a(\M_D(t_{i-1}))} \\
   &\leq \frac{S_DT_D}{(S/D-2st)^D}.
\end{align*}
Let $\xi_D$ denote this upper bound: that is,
$\xi_D=S_DT_D/(S/D-2st)^D$ for $4\leq D\leq\Deltait$.
Note that $\xi_4 = O(s^3t^3/S^2)$.  For 
$4\le D < \Deltait$, we have $\xi_D > 0$ and
\begin{align*}
\frac{\xi_{D+1}}{\xi_D} &\leq st\,\biggl(\frac{(D+1)^{D+1}}{D^D}\biggr)
               \frac{(S-2stD)^D}{(S-2st(D+1))^{D+1}} \\
       &= O(1)\, \frac{Dst}{S-2st(D+1)} 
          \biggl(1 - \frac{2st}{S-2stD}\biggr)^{\!\!-D} \\[0.5ex]
       &= o(1)
\end{align*}
uniformly over~$D$, where the last step uses the observation
that 
$\Deltait\le (st)^{1/2} = o(S^{1/3})$.

\medskip

Since $U_1=\M_4({>}0)\cup\M_5({>}0)\cup\cdots\cup\M_\Deltait({>}0)$
and $\M_{D+1}(0)\subseteq\Mst$ for $4\leq D\leq \Deltait$, 
we have $\card{U_1}/M(\svec,\tvec) 
\leq \xi_4 + \xi_5 + \cdots + \xi_\Deltait = O(s^3t^3/S^2)$
as required.
\end{proof}

\medskip
We may therefore restrict our attention to matrices with no
entry greater than~3.
Next we find upper bounds on the numbers of entries equal
to~2 or~3 which hold with high probability.

Define  
\begin{align*}
N_2 &= \begin{cases} 
    \,22 & \text{ if $S_2T_2 < S^{7/4}$,}\\[0.5ex]
\, \lceil \log  S \rceil & \text{ if $S^{7/4} \leq S_2T_2 < \tfrac{1}{5600} \, S^2\log S$,}\\[0.5ex] 
 \, \lceil 5600 S_2T_2/S^2\rceil & 
      \text{ if $\tfrac{1}{5600} \, S^2\log S\leq S_2 T_2$;} 
  \end{cases}\\[0.5ex]
N_3 &= \max\( \lceil \log  S \rceil, \lceil 230000 S_3T_3/S^3\rceil \).
\end{align*}
(Here and throughout the paper we have not attempted to optimise constants.)

We will use the following lemma.

\begin{lemma}
\label{useful}
Let $k$ be a positive integer and let $q$ and $n$
be positive real numbers such that $n \ge kq$.
Then
\[ n(n-q)\cdots (n-(k-1)q)\geq (n/e)^k.\]
\end{lemma}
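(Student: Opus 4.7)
The plan is to normalize out the leading $n^k$ and reduce the claim to showing
\[
  F(x) := \prod_{i=0}^{k-1}(1-ix) \ge e^{-k}, \qquad x = q/n \in (0,1/k],
\]
where the hypothesis $n \ge kq > 0$ translates directly into the stated bound on $x$ and guarantees that every factor is positive. Once this reduction is in place, the original inequality follows by multiplying back the factor $n^k$.

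My first step would be to show that $F$ is non-increasing on $(0,1/k]$. This is immediate from
\[
  \frac{d}{dx}\log F(x) = -\sum_{i=0}^{k-1}\frac{i}{1-ix} \le 0,
\]
since each summand is non-positive on the interval in question. Consequently the minimum of $F$ over the allowed range is attained at the right endpoint $x = 1/k$, where the product telescopes into $k!/k^k$.

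The remaining task is thus the classical estimate $k! \ge (k/e)^k$, equivalently $\sum_{j=1}^k \ln j \ge k\ln k - k$. This is a one-line integral comparison: since $\ln$ is increasing on $[1,k]$,
\[
  \sum_{j=1}^k \ln j \;\ge\; \int_1^k \ln x\,dx \;=\; k\ln k - k + 1 \;\ge\; k\ln k - k.
\]
I do not foresee any real obstacle; the whole argument is a reduction to the extremal case $n = kq$ followed by an elementary lower bound on $k!$. The only minor care-point is to notice that the positivity of each factor $1 - iq/n$ is in fact ensured by the weaker inequality $n > (k-1)q$, so the full hypothesis $n \ge kq$ is being used only to make the worst case $x = 1/k$ the relevant extremal one.
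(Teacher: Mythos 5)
Your proof is correct, and it takes a genuinely different route from the paper's. Both arguments begin identically, dividing by $n^k$ and reducing to the claim $\prod_{i=0}^{k-1}(1-iq/n)\ge e^{-k}$, but from there they diverge. The paper bounds $\sum_{i=0}^{k-1}\log(1-iq/n)$ directly by the integral $\int_0^k\log(1-xq/n)\,dx$ (valid since the integrand is decreasing in $x$), evaluates that integral in closed form as $-k-(n/q-k)\log(1-kq/n)$, and observes that the second term is nonnegative; the boundary case $n=kq$ is then handled by continuity. You instead first prove monotonicity of the product in the variable $x=q/n$ via the logarithmic derivative, reduce to the extremal case $x=1/k$ where the product becomes $k!/k^k$, and finish with the classical bound $k!\ge(k/e)^k$. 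In effect you have factored the paper's single integral estimate into two steps: a reduction to the worst case followed by a Stirling-type inequality (whose standard proof is the paper's integral comparison specialized to $q/n=1/k$, after a change of variables). Your version has the advantage of identifying the extremal configuration explicitly and of handling $n=kq$ directly rather than by a limiting argument; the paper's version is marginally shorter since it needs no separate monotonicity step. All the individual steps in your argument check out, including the positivity of each factor on $(0,1/k]$ and the right-endpoint Riemann-sum comparison $\sum_{j=1}^k\ln j\ge\int_1^k\ln x\,dx$.
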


\begin{proof}
Dividing the left side by $n^k$ gives, for $n>kq$,
\begin{align*}
\prod_{i=0}^{k-1} (1-iq/n)
       &=   \exp\biggl(\, \sum_{i=0}^{k-1} \log(1-iq/n) \biggr)\\
       &\geq \exp\biggl(\, \int_{0}^k \log(1-xq/n) \, dx \biggr) \\
       &=   \exp\(-k - (n/q-k)\log(1-kq/n))\\
       &\geq \exp(-k).
\end{align*}
The second line holds because $\log(1-xq/n)$ is a 
decreasing function for $x\in[0,k]$.
The case $n=kq$ follows by continuity.
\end{proof}

\begin{lemma}\label{notmany23}
Let $1\leq st = o(S^{2/3})$. 
Then, with probability $1-O(s^3t^3/S^2)$, a random element
of $\Mst$ has no entry greater than 3, at most~$N_3$
entries equal to 3, and at most $N_2$ entries equal to~2.
\end{lemma}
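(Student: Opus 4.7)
The plan is to complement Lemma~\ref{no4} (which already bounds the fraction of matrices in $\Mst$ with an entry exceeding~3 by $O(s^3t^3/S^2)$) with two further applications of Theorem~\ref{t:FM}: one bounding the number of threes, and one bounding the number of twos, both inside $\M_4(0)$.

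For the threes, I apply Theorem~\ref{t:FM} with $D=3$ in the same setup as Lemma~\ref{no4}: take $C(v_j)=\M_3(j)$, edges $v_jv_i$ for $j-4\le i\le j-1$, and $\S$ the pairs related by 3-switchings. Lemma~\ref{ab} with $K=S/3$ (valid since matrices in $\M_4(0)$ have at least $S/3$ nonzero entries, all at most~3) yields $a(v_j)=j(S/3-2st)^3$ and $b(v_j)=S_3T_3$. Rather than taking $Y=\M_3({>}0)$ as in Lemma~\ref{no4} (which would only give the constant-order bound $\max(b/a)$), I apply Theorem~\ref{t:FM} to each singleton $Y=\{v_j\}$ with $j>N_3$; the furnished path $v_j=v_{t_1},\ldots,v_{t_q}=v_0$ has step sizes in $\{1,2,3,4\}$, so $q-1\ge j/4$. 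From $|\M_3(j)|/|\M_4(0)|\le 1/N(v_0)$ and Lemma~\ref{useful} to lower-bound $\prod_{i=1}^{q-1}t_i$, I obtain a Poisson-like tail essentially of the form $C_1(27eS_3T_3/(jS^3))^{j/4}$.

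For the twos, I apply the same method with $D=2$. A 2-switching in $\M_4(0)$ has $q_\ell\in\{1,2\}$ and thus preserves the count of threes, so I may stratify by the number $j'$ of threes, restricting to $j'\le N_3$ (the complement being already bounded by the previous step). In this sub-class, $K=S-j-3j'\ge S/2$ for $j\le S/2$ and $j'\le N_3=o(S)$; Lemma~\ref{ab} yields $a(v_j)=j(K-2st)^2$ and $b(v_j)=S_2T_2$, and the analogous path argument with Lemma~\ref{useful} gives the bound $C_2(4eS_2T_2/(jS^2))^{j/3}$.

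The choice of constants in $N_2$ and $N_3$ ($22$, $5600$, $230000$, and the $\lceil\log S\rceil$ fallbacks) ensures that, using $S_2T_2\le stS^2$ and $S_3T_3\le s^2t^2 S^2$, the tails $\sum_{j>N_2}(4eS_2T_2/(jS^2))^{j/3}$ and $\sum_{j>N_3}(27eS_3T_3/(jS^3))^{j/4}$ each fit into $O(s^3t^3/S^2)$ in every subregime. The main obstacle is this regime-by-regime verification, particularly the $N_2=22$ subcase ($S_2T_2<S^{7/4}$): here the crude application of the tail bound is insufficient, and one must exploit the sharper estimate $S_2T_2/S^2=O(st/S)$ (from $S_2T_2\le stS^2$); the specific constants are calibrated precisely so that the bounds fit uniformly across all regimes.
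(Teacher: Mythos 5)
Your overall strategy --- Theorem~\ref{t:FM} fed by the counts of Lemma~\ref{ab}, first for $3$-switchings inside $\M_4(0)$ and then for $2$-switchings stratified by the (preserved) number of threes, with Lemma~\ref{useful} to estimate the resulting products --- is the same as the paper's. The genuine gap is in how you extract the tail bound from the path that Theorem~\ref{t:FM} furnishes. Taking $Y=\{v_j\}$ and bounding $\card{\M_3(j)}/\card{\M_4(0)}$ by $1/N(v_0)=\prod_{i=1}^{q-1}(\varphi/t_i)$ with $\varphi\approx 27S_3T_3/S^3$, you cannot reach $(e\varphi/j)^{j/4}$ via ``$q-1\ge j/4$ plus Lemma~\ref{useful}'': the exponent of $\varphi$ is $q-1$, which is not under your control (it can be as large as $j$), and the path descends all the way to the sink, so the product contains factors $\varphi/t_i$ with $t_i<\varphi$, each exceeding~$1$. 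Since $S_3T_3/S^3$ may tend to infinity (it can be as large as $o(S^{1/3})$), and likewise $S_2T_2/S^2$ can be as large as $st$, these bottom-of-path factors contribute $e^{\Theta(\varphi)}$ and the crude bound fails precisely in the regime where the constants $230000$ and $5600$ are needed. The paper's proof is structured to avoid exactly this: it never goes near the sink, but compares $N(h_\ell)$ with $N(h_{\ell+u})$ over a window of only $u=\lfloor\frac14\log S\rfloor$ consecutive path vertices just below the threshold, where every index is still at least $N_3-4u+5$ and hence every factor $\varphi/h_i$ is at most $e/8214<1$; the sum over the whole of $Y$ is then handled by the term-by-term comparison $N(h_{\ell-i})/N(h_{\ell+u-i})\le N(h_\ell)/N(h_{\ell+u})$. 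Your version can be repaired (split the product at $t_i\approx\varphi$ and absorb the resulting $e^{O(\varphi)}$ using $j>N_3\ge 8214\varphi$), but as written the claimed Poisson-like tails do not follow from the steps you cite.

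A smaller point: the ``sharper estimate $S_2T_2/S^2=O(st/S)$'' you invoke in the $N_2=22$ case is false --- $S_2T_2\le stS^2$ gives only $S_2T_2/S^2\le st$, and in the regular case $S_2T_2/S^2=(s-1)(t-1)$. What actually rescues that subcase (and what the paper does, with $w=8$) is that $S_2T_2<S^{7/4}$ forces $\psi=O(S^{-1/4})<1$, and a path starting from $j\ge 23$ has at least $8$ steps, so eight factors of $\psi$ already give $O(S^{-2})$; no bound involving $st$ is needed there.
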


\begin{proof}
In view of Lemma~\ref{no4},
we may restrict our attention to the set $\M_4(0)$ of all matrices
in $\Mst$ with maximum entry at most 3.
We will start by applying 3-switchings as in 
Lemma~\ref{no4}
but the analysis will be more delicate.

In applying Theorem~\ref{t:FM} we have $V=\{ v_0,v_1,\ldots\,\}$,
with $v_h$ associated with $\M_3(h)$, and $Y=\{v_h \mid h > N_3\}$.
For sufficiently large $S$, we have from Lemma~\ref{ab} that
we can take $a(v_h)=\tfrac1{28}hS^3$ and $b(v_h)=S_3T_3$. 
If $S_3T_3=0$ then entries equal to 3 are impossible, so
we assume that $S_3T_3>0$.
Define $\varphi = 28 S_3T_3/S^3$.

According to Theorem~\ref{t:FM}, there is a sequence
\[ h_1 > h_2 > \cdots > h_q = 0,\]
with $h_1 > N_3$ and 
$h_{i-1} -4 \le h_{i} < h_{i-1}$ for all~$i$,
such that
\begin{align*}
 \frac{\card{\M_3({>}N_3)}}{\card{\Mst}}
    &\leq
          \frac{\card{\M_3({>}N_3)}}{\card{\M_4(0)}}\\
    &\leq \frac{N(h_\ell)  + N(h_{\ell-1}) + \cdots + N(h_1)}
             {N(h_{q}) + N(h_{q-1}) + \cdots + N(h_1)},
\end{align*}
where $\ell$ is the largest index such that $h_\ell\ge N_3+1$ and 
$N(h_i) = h_1\cdots h_{i-1} \varphi^{-i+1}$ for all~$i$.

Define $u=\lfloor\tfrac14\log S\rfloor$.
Since $N_3\ge\lceil\log S\rceil$, we have $\ell+u\le q$.
Also, for $0\le i\le\ell-1$,
\[ \frac{N(h_{\ell-i})}{N(h_{\ell+u-i})}
  \leq \frac{N(h_\ell)}{N(h_{\ell+u})}. \]
Therefore,
\begin{align*}
 \frac{\card{\M_3({>}N_3)}}{\card{\Mst}}
    &\leq \frac{N(h_\ell)  + N(h_{\ell-1}) + \cdots + N(h_1)}
             {N(h_{\ell+u}) + N(h_{\ell+u-1}) + \cdots N(h_{u+1})} \\
    &\leq \frac{N(h_\ell)}{N(h_{\ell+u})} \\
    &= \frac{\varphi^u}{h_\ell h_{\ell+1}\cdots h_{\ell+u-1}} \\
    &\leq \frac{\varphi^u}{(N_3+1)(N_3-3)\cdots(N_3-4u+5)}. 
\end{align*}
Since $N_3+1 > 4u$ we can apply Lemma~\ref{useful} 
to obtain the bound
\[
 \frac{\card{\M_3({>}N_3)}}{\card{\Mst}} 
  \leq \left(\frac{\varphi \, e}{N_3 + 1}\right)^{\!\!u}.
\]
Now $N_3\geq 230000 S_3T_3/S^3\geq 8214\, \varphi$,
and $u\ge\tfrac14\log S-1$,
so this upper bound is at most
\[
 \Bigl(\frac{e}{8214}\Bigr)^{\!\tfrac14\log S-1} 
  = O(1)S^{\frac14\log(e/8214)}\\
   = O(S^{-2}).
\]
This shows that with probability
$O(s^3t^3/S^2)$ there are at most $N_3$ entries equal to 3, as required.

\medskip
To bound the number of entries equal to~2, we proceed in
the same manner using 2-switchings, working under the
assumption that there are at most~$N_3$ entries equal to~3
and none greater than~3.
In applying Lemma~\ref{ab},
we can take $K=\tfrac12(S-3N_3)$, so that
$(K-2st)^2 \ge \tfrac15 S^2$ for sufficiently large~$S$.
Define $\psi=5S_2T_2/S^2$.
Arguing as above we find a sequence 
\[ d_1 > d_2 > \cdots > d_r = 0,\]
with the following properties: (i) $d_1 > N_2$ and 
$d_{i-1} -3\le  d_{i} < d_{i-1}$ 
for all~$i$, and
(ii)
if $p$ is the greatest integer such that $d_p>N_2$ then,
for any $w$ with $0<w\le r-p$,
the probability that there are
more than $N_2$ entries equal to~2, subject to there being at
most $N_3$ equal to~3, is bounded above by
\begin{equation}\label{2bound}
  \frac{\psi^w}{d_p d_{p+1}\cdots d_{p+w-1}}\,.
\end{equation}

First suppose that $S_2T_2 < S^{7/4}$, so that $N_2=22$ and
$\psi < 5S^{-1/4}$.
Since $d_p\geq N_2+1 = 23$, it follows that $r-p\geq 8$.
Taking $w=8$ in \eqref{2bound} gives 
\[ 
 \frac{\card{Y}}{\card{\Mst}}
    \leq \frac{\psi^8}{d_p d_{p+1}\cdots d_{p+7}}
   = O(S^{-2}).
\]

Now suppose that $S_2T_2\geq S^{7/4}$.
Then $N_2\geq \lceil\log S\rceil$
so we can take $w = \lfloor \tfrac13\log S\rfloor$.
Arguing as above by applying Lemma~\ref{useful} to~\eqref{2bound}, 
we obtain the bound $O(S^{-2})$ again.
This completes the proof.
\end{proof}

\medskip

{}From now on we proceed in two cases, as in~\cite{GMW}.
Say that the pair $(S_2,T_2)$ is \emph{substantial} if the
following conditions hold:
\begin{itemize}\itemsep=0pt
 \item $1\le st = o(S^{2/3})$,
 \item $S_2 \ge s\log^2 S$ and $T_2 \ge t\log^2 S$,
 \item $S_2T_2 \ge (st)^{3/2} S$.
\end{itemize}

\begin{lemma}\label{easycases}
If\/ $1\le st = o(S^{2/3})$ and $(S_2,T_2)$ is insubstantial, 
then with probability $1-O(s^3t^3/S^2)$, a random element of
$\Mst$ has no entry greater than 3, at
most one entry equal to 3 and at most two entries equal to 2.
\end{lemma}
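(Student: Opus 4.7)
By Lemma~\ref{no4}, the probability that some entry exceeds~$3$ is already $O(s^3t^3/S^2)$, so it suffices to work within $\M_4(0)$ and bound separately the probabilities that a random element of $\Mst$ has at least two entries equal to~$3$ and that it has at least three entries equal to~$2$. My strategy is to use \emph{multi-fold} switchings, which amplify the single-step bound implicit in Theorem~\ref{t:FM}.

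For each integer $k\ge 1$, I consider $k$-fold $D$-switchings: sequences $Q=Q_0\to Q_1\to\cdots\to Q_k$ of $k$ successive $D$-switchings, each chosen so that the number of $D$-entries decreases by exactly~$1$. This is enforced by requiring every diagonal entry $q_\ell$ in each switching to equal~$1$ rather than~$D$. By Lemma~\ref{notmany23}, outside an event of probability $O(s^3t^3/S^2)$ the matrix $Q$ has at most $N_2$ entries equal to~$2$ and $N_3$ entries equal to~$3$, both $O(\log S)$ in the insubstantial regime; since our restricted switchings change only the counts of $D$- and $1$-entries, this bound is inherited by every $Q_i$, so each $Q_i$ has at least $S-2N_2-3N_3\ge S/2$ entries equal to~$1$. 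The number of $k$-fold switchings leaving $Q$ with $j(Q)\ge k$ entries equal to~$D$ is therefore at least $[j(Q)]_k(S/3-2st)^{Dk}$, while by Lemma~\ref{ab} each single reverse step is bounded by $S_DT_D$, giving at most $(S_DT_D)^k$ reverse $k$-fold switchings into any fixed matrix. Double-counting and the inequality $[j]_k\ge k!\,\mathbf{1}_{j\ge k}$ then yield
\[
P(\ge 2\text{ threes in }\M_4(0))=O\!\left(\frac{(S_3T_3)^2}{S^6}\right),\qquad P(\ge 3\text{ twos in }\M_4(0))=O\!\left(\frac{(S_2T_2)^3}{S^6}\right).
\]

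It remains to verify that both bounds are $O(s^3t^3/S^2)$ in each insubstantial subcase. If $S_2T_2<(st)^{3/2}S$, then using $S_3T_3\le st\cdot S_2T_2$ one obtains
\[
\frac{(S_3T_3)^2}{S^6}<\frac{(st)^5}{S^4}=O\!\left(\frac{(st)^3}{S^2}\right),\qquad
\frac{(S_2T_2)^3}{S^6}<\frac{(st)^{9/2}}{S^3}=O\!\left(\frac{(st)^3}{S^2}\right),
\]
since $st=o(S^{2/3})$ implies $(st)^2=o(S^2)$ and $(st)^{3/2}=o(S)$. If $S_2<s\log^2 S$, then $S_2\ge s(s-1)$ forces $s=O(\log^2 S)$, so $S_2=O(\log^4 S)$, $S_3\le sS_2=O(\log^6 S)$, while trivially $T_2\le tS$ and $T_3\le t^2S$; combined with $t\le S^{2/3}$ this makes both $(S_3T_3)^2/S^6=O(t^4\log^{12}S/S^4)$ and $(S_2T_2)^3/S^6=O((st)^3\log^6S/S^3)$ smaller than $s^3t^3/S^2$ for large~$S$. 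The subcase $T_2<t\log^2 S$ is symmetric.

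The main obstacle is the lower bound on the number of forward $k$-fold switchings: one must ensure that not only $Q$, but every intermediate matrix~$Q_i$ in the chain, has sufficiently many $1$-entries to support the next step. This is exactly what Lemma~\ref{notmany23} provides, by simultaneously capping the number of $2$- and $3$-entries. Once this is in place, the remaining verification in each insubstantial subcase is a routine calculation.
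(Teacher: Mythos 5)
Your proposal is correct and follows essentially the same route as the paper: invoke Lemma~\ref{no4} for entries greater than~3, use Lemma~\ref{notmany23} to guarantee $S-o(S)$ entries equal to~1, and then apply restricted $D$-switchings (all $q_\ell=1$) whose forward/reverse counts are $\varTheta(jS^D)$ and at most $S_DT_D$, concluding with the case analysis over the three causes of insubstantiality. Your packaging as a single $k$-fold switching double count (with $k=2$ for threes and $k=3$ for twos) is just the composed form of the paper's iterated one-step ratio bound $N''(h)/N''(h-1)=O(S_DT_D/(hS^D))$, so the two arguments are equivalent in substance.
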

\begin{proof}
The absence of entries greater than~3 follows from Lemma~\ref{no4}.
We can also, by Lemma~\ref{notmany23}, assume that the number
of entries equal to~2 or~3 is $o(S)$.
Therefore, most of the matrix entries are~0 or~1.
Let $\N$ be the set of all matrices in $\Mst$ with no entries
greater than 3, at most $N_2$
entries equal to 2 and at most $N_3$ entries equal to 3. 

To bound the number of entries equal to~2 or~3 even more tightly,
as this lemma requires, we employ $D$-switchings ($D=2,3$) with
the additional restriction that $q_1=\cdots=q_D=1$.
This ensures
that these \textit{restricted} $D$-switchings reduce the number
of entries equal to~$D$
by exactly one and do not create any new entries equal to~2 or~3.

Let $N''(h)$ be the number of matrices in $\N$ with $h$ 
entries equal to 3.  
If $Q$ is such a matrix then the number of restricted
$3$-switchings applicable to $Q$ is $hS^3(1+o(1))$
and the number of reverse restricted $3$-switchings is
at most $S_3T_3$. 
(This follows using arguments similar to those in Lemma~\ref{ab},
since there are $S-o(S)$ entries equal to 1.)
Therefore, if the denominator is nonzero,
\begin{equation}\label{insubs1}
   \frac{N''(h)}{N''(h{-}1)}
        = O(1) \frac{S_3T_3}{hS^3}.
\end{equation}
We can now easily check that each of the three causes
of insubstantiality (namely, $S_2 < s\log^2 S$, $T_2 < t\log^2 S$,
and 
$S_2T_2 < (st)^{3/2} S$) imply that
\[
   \frac{S_3T_3}{S^3} = O(s^{3/2}t^{3/2}/S) = o(1).
\]
Hence~\eqref{insubs1} implies that
\[
   \frac{\sum_{h\ge 2}N''(h)}{N''(0)}
      = O(s^3t^3/S^2).
\]
In precisely the same way, using restricted 2-switchings,
we find that
\[
   \frac{\sum_{d\ge 3}N'(d)}{N'(0)}
      = O(s^3t^3/S^2),
\]
where $N'(d)$ is the number of matrices in $\N$ with
$d$ entries equal to 2 and at most one entry equal to 3.  
The lemma follows.
\end{proof}

\section{From pairings to matrices}\label{s:pairings}

The remainder of the paper will involve calculations in the
\emph{pairing model}, which we now describe.
(This model is standard for working with random bipartite
graphs of fixed degrees:  see for example~\cite{McK}.)
Consider a set of $S$ {\it points\/} arranged in {\it cells\/}
$x_1,x_2,\ldots,x_m$, where cell $x_i$ has size~$s_i$ for $1\leq i\leq m$,
and another set of $S$ points
arranged in cells $y_1,y_2,\ldots,y_n$ where cell $y_j$ has size~$t_j$
for $1\leq j\leq n$.
Take a partition $P$ (called a {\it pairing\/}) of the $2S$ points
into~$S$ {\it pairs\/} with each pair having the form $(x,y)$
where $x\in x_i$ and $y \in y_j$ for some~$i,j$.
The set of all such pairings, of which there are $S!$,
will be denoted by~$\Pst$.
We work in the uniform probability space on~$\Pst$.

Two pairs are called \textit{parallel\/} if they involve the same
two cells.
A \textit{parallel class\/} is a maximal set of mutually
parallel pairs.
The \textit{multiplicity\/} of a parallel class (and of the
pairs in the class) is the cardinality of the class.
As important special cases,
a \textit{simple pair\/} is a parallel class of multiplicity one,
a \textit{double pair\/} is a parallel class of multiplicity two,
while a \textit{triple pair\/} is a parallel class of multiplicity
three.

Each pairing $P\in\Pst$ gives rise to a matrix
in $\Mst$ by letting the $(i,j)$-th entry of the matrix equal
the multiplicity of the parallel class from $x_i$ to $y_j$ in $P$. 

In~\cite{GMW} we noted that the number of pairings which gives
rise to each 0-1 matrix in $\Mst$ 
depends only on $\svec$ and $\tvec$ and is independent of the
structure of the matrix.  Hence the task of counting such
matrices reduces to finding the fraction of pairings that
have no multiplicities greater than~1.

More generally, matrices in $\Mst$ correspond to different
numbers of pairings.
For a pairing $P\in\Pst$, define the \textit{multiplicity vector\/}
of $P$ to be $\avec(P)=(a_2,a_3,\ldots\,)$
where~$a_r$ is the number of parallel classes of multiplicity~$r$.
Also define the \textit{weight} of $P$ as 
\[ w(P) = (2!)^{a_2} \,(3!)^{a_3} \,(4!)^{a_4}\cdots\, \]
For $Q\in\Mst$, define $w(Q)$ and $\avec(Q)$ to be the common
weight and multiplicity vectors of the pairings that yield~$Q$.

By elementary counting, a matrix $Q\in\Mst$ corresponds
to exactly
\[ \frac{1}{w(Q)}\, \prod_{i=1}^m s_i!\, 
 \prod_{j=1}^n \,t_j!\]
pairings in $\Pst$.
Therefore, if $A$ is a set of multiplicity vectors,
$\P_A=\{P\in\Pst\mid \avec(P)\in A\}$, and
$\M_A=\{Q\in\Mst\mid \avec(Q)\in A\}$, then
\begin{equation}\label{meM}
  \card{\M_A} = \frac{\sum_{P\in\P_A} w(P)}
   {\prod_{i=1}^m s_i!\,\prod_{j=1}^n \,t_j!}\,.
\end{equation}
This holds in particular if $A$ is the set of all nonnegative 
integer sequences, in which case $\P_A=\Pst$ and $\M_A=\Mst$.

\bigskip

We first prove Theorem~\ref{main} in the case that $(S_2,T_2)$ is
insubstantial.

\begin{lemma}\label{easycases2}
If\/ $1\le st = o(S^{2/3})$ and $(S_2,T_2)$ is insubstantial
then Theorem~\ref{main} holds.  
\end{lemma}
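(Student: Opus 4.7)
The plan is to reduce $M(\svec,\tvec)$ to counts in the pairing model and estimate these by switchings, adapting the approach of~\cite{GMW}.  By Lemma~\ref{easycases}, in the insubstantial case,
\[
 M(\svec,\tvec) = \bigl(|\M_{0,0}| + |\M_{1,0}| + |\M_{2,0}| + |\M_{0,1}|\bigr)\bigl(1 + O(s^3t^3/S^2)\bigr),
\]
where $\M_{d,h} \subseteq \Mst$ denotes matrices with exactly $d$ entries equal to $2$, exactly $h$ entries equal to $3$, and no entries greater than $3$.  Since $\M_{0,0}$ is precisely the set of $(0,1)$-matrices with row sums $\svec$ and column sums $\tvec$, we have $|\M_{0,0}| = N(\svec,\tvec)$, and the task reduces to estimating the three ratios $|\M_{d,h}|/N$ for $(d,h)\in\{(1,0),(2,0),(0,1)\}$.

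By~\eqref{meM}, $|\M_{d,h}|/N = (2!)^d(3!)^h\,|\P_{d,h}|/|\P_{0,0}|$, where $\P_{d,h}$ is the set of pairings with exactly $d$ doubles, $h$ triples, and all other parallel classes simple.  Each pairing ratio is to be estimated by switching arguments on parallel classes: for example, a switching reducing the number of doubles by one picks a double at some cell pair, selects two further simple pairs in suitably distinct cells, and rewires to eliminate the double without creating any new non-simple class; an analogous switching handles triples.  Careful forward and reverse switching counts, in the style of Lemma~\ref{ab} and using Theorem~\ref{t:FM}, should yield each ratio up to a small relative error coming from the switching restrictions.

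Assembling the three weighted contributions gives an expression for $M/N$ of the form $1 + S_2T_2/S^2 + S_2T_2/S^3 + \tfrac12(S_2T_2/S^2)^2 + (\text{corrections}) + O(s^3t^3/S^2)$, which is then identified with the Taylor expansion of $\exp(S_2T_2/S^2+S_2T_2/S^3+O(s^3t^3/S^2))$.  The main obstacle is the delicate case analysis needed to verify that the remaining corrections (from $|\M_{0,1}|$ and from switching errors) are absorbed into $O(s^3t^3/S^2)$ in each subcase of insubstantiality.  The key inequality is $S_3T_3 \le st\,S_2T_2$ (from $[s_i]_3 \le s[s_i]_2$), which combined with the insubstantiality assumptions and the global hypothesis $st = o(S^{2/3})$ controls the error terms.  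The subcase $S_2T_2 < (st)^{3/2}S$ is the cleanest, since here both $S_3T_3/S^3$ and $(S_2T_2/S^2)^2$ are immediately $O(s^3t^3/S^2)$; the subcases $S_2 < s\log^2 S$ and $T_2 < t\log^2 S$ require more careful bookkeeping but follow along similar lines.
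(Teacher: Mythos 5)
Your reduction is sound as far as it goes: using Lemma~\ref{easycases} together with~\eqref{meM} to write $M(\svec,\tvec)$ as $N(\svec,\tvec)$ plus weighted counts of the few remaining classes is exactly the right first move (though note that Lemma~\ref{easycases} permits the classes $\M_{1,1}$ and $\M_{2,1}$ as well, so you must separately argue these are negligible --- easy, since their probability is roughly $\frac{S_2T_2}{2S^2}\cdot\frac{S_3T_3}{6S^3}$, but it is a step you have skipped). The genuine gap is in how you propose to estimate the ratios $\card{\P_{d,h}}/\card{\P_{0,0}}$. You assert that forward/reverse switching counts ``should yield each ratio up to a small relative error.'' That is precisely the claim that fails in the insubstantial case, and it is the entire reason the paper (following~\cite{GMW}) splits into substantial and insubstantial regimes. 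The switching ratio estimates available (Lemma~\ref{dswitch}, i.e.\ \cite[Lemma 4.8]{GMW}) carry relative errors of the form $O\bigl(s^2/S_2^2 + t^2/T_2^2 + st(d+st)/(S_2T_2)\bigr)$, which are controlled only by \emph{lower} bounds on $S_2$ and $T_2$ --- exactly what substantiality supplies and insubstantiality denies. When, say, $S_2 < s\log^2 S$, these relative errors can be unbounded, the factor $1-(d-1)B$ in Lemma~\ref{dswitch} can even change sign, and the hypotheses of that lemma simply do not hold. At the same time you cannot discard the term $\tfrac12(S_2T_2/S^2)^2$: when $S_2<s\log^2 S$ it can be as large as $s^2t^2\log^4 S/S^2$, which is not $O(s^3t^3/S^2)$ when $st$ is small, so $\card{\M_{2,0}}$ genuinely must be computed, not merely bounded. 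One can check that the \emph{absolute} errors in the telescoped switching products happen to stay within $O(s^3t^3/S^2)$ even when the relative errors blow up, but establishing this would require reproving the switching lemmas of~\cite{GMW} without the substantiality hypothesis; you have assumed the hard part rather than done it.

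The paper's own proof of this lemma avoids switchings entirely at this stage. It works with \emph{doublets} (unordered pairs of parallel pairs), takes the exact expectations $b_0,\ldots,b_4$ of the number of sets of $r$ doublets from \cite[Lemma~2.2]{GMW}, and applies the Bonferroni inequalities to extract the probabilities $p_k$ of exactly $k$ doublets with error $O(s^3t^3/S^2)$; the answer is then $\frac{S!}{\prod_i s_i!\prod_j t_j!}(p_0+2p_1+4p_2+6p_3+O(s^3t^3/S^2))$. The moment computation is an exact combinatorial sum with no dependence on lower bounds for $S_2,T_2$, which is what makes it the right tool here. If you want to salvage your route, the cleanest fix is to abandon the switching estimates for the insubstantial case and compute $\card{\P_{1,0}},\card{\P_{2,0}},\card{\P_{0,1}}$ by inclusion--exclusion over doublets, which is in effect what the paper does.
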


\begin{proof}
Similarly to~\cite[Lemma 2.2]{GMW}, define a \textit{doublet\/}
to be to be an unordered set of 2 parallel pairs.
A double pair provides one doublet, while a triple pair
provides~3 doublets.
For the uniform probability space over $\Pst$, let $b_r$ be
the expectation of the number of sets of $r$ doublets,
for $r\ge 0$.
In~\cite[Lemma 2.2]{GMW} it is shown that
\begin{align*}
b_0 &= 1,\\
b_1 &= \frac{S_2T_2}{2[S]_2},\\
b_2 &= \frac{S_3T_3}{2[S]_3} + 
   \frac{(S_2^2 - 4S_3 - 2S_2)(T_2^2 - 4T_3 - 2T_2)}{8[S]_4},\\
b_3 &= \frac{S_3T_3}{6[S]_3} + O(s^3t^3/S^2),\\
b_4 &= O(s^3t^3/S^2).
\end{align*}

Let $p_k$ denote the probability that a randomly chosen pairing
contains exactly $k$ doublets, for $k\geq 0$.  
Then
\[ p_k = \sum_{r\geq k} (-1)^{r+k}\, \binom{r}{k}\, b_r\]
and the partial sums of 
this series alternate above and below~$p_k$
(see for example~\cite[Theorem 1.10]{bollobas}).
Applying this, we find that
\begin{align*}
p_0 &= 1 - \frac{S_2T_2}{2[S]_2} + \frac{S_3T_3}{3[S]_3}
  + \frac{({S_2}^2 - 4S_3 - 2S_2)({T_2}^2 - 4T_3 - 2T_2)}{8[S]_4}
   + O(s^3t^3/S^2),\\
p_1 &=  \frac{S_2T_2}{2[S]_2} - \frac{S_3T_3}{2[S]_3} 
  - \frac{(S_2^2 - 4S_3 - 2S_2)(T_2^2 - 4T_3 - 2T_2)}{4[S]_4} 
   + O(s^3t^3/S^2),\\
p_2 &=  \frac{(S_2^2 - 4S_3 - 2S_2)(T_2^2 - 4T_3 - 2T_2)}{8[S]_4} 
                   + O(s^3t^3/S^2),\\
p_3 &=  \frac{S_3T_3}{6[S]_3} + O(s^3t^3/S^2).
\end{align*}
(The expression for $p_0$ was also derived in~\cite[Lemma 2.2]{GMW}.)
The configurations defining these cases are, respectively,
no parallel pairs, one double pair, two double pairs, and one
triple pair. 

Applying Lemma~\ref{easycases} and~\eqref{meM},
\begin{align*} M(\svec,\tvec) 
  &= \( 1+O(s^3t^3/S^2) \)
  \frac{S!}{\prod_{i=1}^m s_i!\,\prod_{j=1}^n \,t_j!}
  \( p_0 + 2p_1 + 4p_2 + 6p_3 \) \\
  &= 
  \frac{S!}{\prod_{i=1}^m s_i!\,\prod_{j=1}^n \,t_j!}
  \( p_0 + 2p_1 + 4p_2 + 6p_3 + O(s^3t^3/S^2) \) \\
  &= \frac{S!}{\prod_{i=1}^m s_i!\,\prod_{j=1}^n \,t_j!}\\
  &{\qquad}\times\biggl(
   1 + \frac{S_2T_2}{2[S]_2} + \frac{S_3T_3}{3[S]_3} 
  + \frac{(S_2^2 - 4S_3 - 2S_2)(T_2^2 - 4T_3 - 2T_2)}{8[S]_4} +
  O(s^3t^3/S^2)\biggr),
\end{align*}
where we have used the fact that $p_0+2p_1+4p_2+6p_3=1+o(1)$ in
the insubstantial case to get the second line.

This expression is equal to the expression in Theorem~\ref{main}
under our present assumptions.  (Note that since $(S_2,T_2)$ is 
insubstantial, the term $S_2^2T_2^2/2S^5$ which appears in the
statement of Theorem~\ref{main} is absorbed into the error term.)
\end{proof}

For nonnegative integers $d,h$, define $\C_{d,h}=
\C_{d,h}(\svec,\tvec)$ to be the set of all pairings in $\Pst$ with
exactly $d$ double pairs and $h$ triple pairs, but no parallel
classes of multiplicity greater than~3.
Also define
\[ w(\C_{d,h}) = \sum_{P\in\C_{d,h}} w(P)
     = 2^d \, 6^h\, \ac{d,h}. \]

A special case of~\eqref{meM}, used in~\cite{GMW}, is that the
number of 0-1 matrices in $\Pst$ is
\[
    N(\svec,\tvec) 
      = \frac{\ac{0,0}}
             {\prod_{i=1}^m s_i!\,\prod_{j=1}^n \,t_j!}\,.
\]
We will proceed by writing $M(\svec,\tvec)$ in terms of
$N(\svec,\tvec)$, as follows.

\begin{lemma}\label{2.3.2}
If $(S_2,T_2)$ is substantial then
\[ M(\svec,\tvec) = 
     N(\svec,\tvec)\, \sum_{d=0}^{N_2}\sum_{h=0}^{N_3}
         \frac{w(\C_{d,h})}{w(\C_{0,0})} \(1 + O(s^3t^3/S^2)\).
         \vadjust{\vskip-1ex}
\]
\end{lemma}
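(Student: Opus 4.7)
The plan is to assemble the lemma from pieces that are already in hand, since at this stage it is essentially a bookkeeping step. First I would let $\M^\star \subseteq \Mst$ be the subset of matrices with no entry greater than $3$, at most $N_2$ entries equal to $2$, and at most $N_3$ entries equal to $3$. Lemmas~\ref{no4} and~\ref{notmany23}, both of which apply whenever $1 \le st = o(S^{2/3})$, together give $\card{\M^\star}/M(\svec,\tvec) = 1 - O(s^3t^3/S^2)$, equivalently
\[
M(\svec,\tvec) = \card{\M^\star}\bigl(1 + O(s^3t^3/S^2)\bigr).
\]

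Next I would pass to the pairing model. Because the $(i,j)$-entry of the matrix associated with a pairing equals the multiplicity of the parallel class from $x_i$ to $y_j$, the set $\M^\star$ is exactly $\M_A$ where $A$ consists of all multiplicity vectors $(a_2,a_3,a_4,\ldots)$ with $a_r=0$ for $r\ge 4$, $a_2\le N_2$, and $a_3\le N_3$. Then $\P_A$ is the disjoint union $\bigcup_{d=0}^{N_2}\bigcup_{h=0}^{N_3}\C_{d,h}$, and equation~\eqref{meM} yields
\[
\card{\M^\star} = \frac{1}{\prod_{i=1}^m s_i!\,\prod_{j=1}^n t_j!}\sum_{d=0}^{N_2}\sum_{h=0}^{N_3} w(\C_{d,h}).
\]

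Finally, every pairing in $\C_{0,0}$ has $a_r = 0$ for all $r\ge 2$ and hence weight $1$, so $w(\C_{0,0}) = \ac{0,0}$. Combining this with the identity $N(\svec,\tvec) = \ac{0,0}\big/\bigl(\prod_i s_i!\,\prod_j t_j!\bigr)$ recalled immediately before the statement of the lemma, the factor $\prod s_i!\,\prod t_j!$ cancels between $\card{\M^\star}$ and $N(\svec,\tvec)$, and reinserting the multiplicative error $1+O(s^3t^3/S^2)$ produces the claimed identity.

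There is no real obstacle: the lemma is a packaging step rather than a substantive calculation. The substantiality hypothesis plays no active role in the argument itself and is carried along only because the insubstantial case has already been dispatched directly in Lemma~\ref{easycases2}, where a more compact expansion of $M(\svec,\tvec)$ was available. The actual analytic content of Theorem~\ref{main} in the substantial regime is thereby displaced into the evaluation of the ratios $w(\C_{d,h})/w(\C_{0,0})$ by switchings on pairings, which the outline of Theorem~\ref{main} promises to carry out in the remainder of Section~\ref{s:pairings} and which culminates in Corollary~\ref{dsum}.
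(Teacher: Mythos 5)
Your proposal is correct and follows essentially the same route as the paper: the paper's proof likewise combines Lemma~\ref{notmany23} with equation~\eqref{meM} and the identity $N(\svec,\tvec)=\ac{0,0}/\bigl(\prod_i s_i!\,\prod_j t_j!\bigr)$, treating the lemma as pure bookkeeping. Your extra citation of Lemma~\ref{no4} is redundant (the ``no entry greater than 3'' conclusion is already part of Lemma~\ref{notmany23}) but harmless, and your remark about the role of the substantiality hypothesis matches the paper's structure.
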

\begin{proof}
By Lemma~\ref{notmany23} and~\eqref{meM},
\begin{align*}
  M(\svec,\tvec) &= 
  \frac{1}{\prod_{i=1}^m s_i!\prod_{j=1}^n t_j!}
      \,\sum_{d=0}^{N_2}\sum_{h=0}^{N_3} w(\C_{d,h})
         \(1 + O(s^3t^3/S^2)\) \\
 &=
  N(\svec,\tvec)
      \sum_{d=0}^{N_2}\sum_{h=0}^{N_3}
        \frac{w(\C_{d,h})}{w(\C_{0,0})} \(1 + O(s^3t^3/S^2)\).\qedhere
\end{align*}
\end{proof}

We will evaluate the sum in Lemma~\ref{2.3.2} 
using two summation lemmas proved in~\cite{GMW} and restated below.

\begin{lemma}[{\cite[Corollary 4.3]{GMW}}]\label{sumcor}
Let $0\leq A_1\leq A_2$ and $B_1\leq B_2$ be real numbers.
Suppose that there exist integers $N$, $K$ with
$N\geq 2$ and  $0\leq K\leq N$, and a real number
$c> 2e$ such that $0\leq Ac<N-K+1$ and $\abs{BN}<1$
for all $A\in[A_1,A_2]$ and $B\in[B_1,B_2]$. 
Further suppose that there are real numbers $\delta_i$, for $1\le i\le N$,
and\/ $\gamma_i\ge 0$,  for $0\le i\le K$, such that\/
$\sum_{j=1}^i \abs{\delta_j}\le \sum_{j=0}^K\gamma_j\ff ij<\tfrac15$
for\/ $1\le i\le N$.\\
Given $A(1),\ldots,A(N)\in[A_1,A_2]$ and
$B(1),\ldots,B(N)\in[B_1,B_2]$,
define $n_0,n_1,\ldots,n_N$ by $n_0=1$ and
$$\frac{n_i}{n_{i-1}} = \frac {A(i)}i \(1 - (i-1)B(i)\)\(1+\delta_i)$$
for\/ $1\le i\le N$, with the following interpretation: if $A(i)=0$
then $n_j=0$ for $i\leq j\leq N$. 
Then  $$\varSigma_1 \le \sum_{i=0}^N n_i\le \varSigma_2,$$
where
\begin{align*}
 \varSigma_1 &= \exp\Bigl( A_1 - \tfrac12 A_1^2B_2 
     - 4 \sum_{j=0}^K\gamma_j(3A_1)^j\Bigr) - \tfrac14 (2e/c)^N,\\
 \varSigma_2 &= \exp\Bigl( A_2 - \tfrac12 A_2^2B_1 
 + \tfrac12 A_2^3B_1^2
     + 4 \sum_{j=0}^K\gamma_j(3A_2)^j\Bigr) + \tfrac14 (2e/c)^N.
     \quad\qedsymbol
\end{align*}
\end{lemma}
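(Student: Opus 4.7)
The plan is to establish two-sided bounds on $\sum_{i=0}^N n_i$ by recognising the recursion $n_i/n_{i-1}=(A(i)/i)(1-(i-1)B(i))(1+\delta_i)$ as a perturbation of the Poisson weights $A^i/i!$. The factor $A(i)/i$ drives Poisson-like growth; the factor $1-(i-1)B(i)$ is a quadratic correction responsible for the $-\tfrac12 A^2 B$ term in the exponent; and the multiplicative noise $(1+\delta_i)$ will be absorbed into the $4\sum_j\gamma_j(3A)^j$ correction in the final exponential.

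First I would unfold the recursion as
\[
 n_i = \prod_{j=1}^i \frac{A(j)}{j}\,\bigl(1-(j-1)B(j)\bigr)\,(1+\delta_j),
\]
and pass to the extremes. Because $\abs{BN}<1$, every factor $1-(j-1)B(j)$ is positive for $j\le N$, and monotonicity in $B$ gives
\[
 n_i \le \frac{A_2^i}{i!}\prod_{j=1}^i \bigl(1-(j-1)B_1\bigr)(1+\abs{\delta_j}),
\]
with a symmetric lower bound using $A_1$, $B_2$, and $1-\abs{\delta_j}$. Using $-u-u^2\le \log(1-u)\le -u$, valid for $\abs u <\tfrac12$ (which is secured by $\abs{BN}<1$), the $B$-product lies between $\exp(-\binom{i}{2}B_1-\tfrac13 i^3 B_1^2)$ and $\exp(-\binom{i}{2}B_1)$. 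For the $\delta$-product, the hypothesis yields $\prod(1+\abs{\delta_j})\le\exp\bigl(\sum_{j=0}^K\gamma_j\ff ij\bigr)$. Combining these with the identity $\sum_{i\ge 0} A^i\ff ij/i! = A^j e^A$ and the Taylor expansion of $\exp(-\binom{i}{2}B_1)$ to quadratic order, the full sum $\sum_{i\ge 0} n_i$ upper-bounds to $\exp\bigl(A_2-\tfrac12 A_2^2 B_1+\tfrac12 A_2^3 B_1^2+\cdots\bigr)$ after re-exponentiating; the asymmetric $+\tfrac12 A_2^3 B_1^2$ term in the upper bound reflects the next-order Taylor remainder that is absent from the lower estimate.

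Finally, passing from the infinite sum back to the truncated sum $\sum_{i=0}^N n_i$ requires a tail estimate. The hypothesis $Ac<N-K+1$ with $c>2e$ together with Stirling's inequality yields $A^N/N!\le (eA/N)^N\le (e/c)^N$, giving a geometric tail of size at most $\tfrac14(2e/c)^N$. I expect the main obstacle to be the bookkeeping for the $\delta$ perturbations: showing that the envelope bound $\sum_{j\le i}\abs{\delta_j}\le\sum_j\gamma_j\ff ij<\tfrac15$, once exponentiated inside the product and then summed against the Poisson weights, produces precisely the stated correction $4\sum_j\gamma_j(3A)^j$. The constants $4$ and $3$ arise from the sub-exponential inequality $e^x\le 1+2x$ valid for $x\le\tfrac15$ and from consolidating the resulting cross-terms of the form $\sum_i (A^i/i!)\ff ij\ff ik$ into a clean ``renormalised'' factor of order $(3A)^{j+k}$ rather than $A^{j+k}$, which accounts for the widening from base $A$ to base $3A$ in the $\gamma_j$-correction.
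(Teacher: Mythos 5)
You should first be aware that the paper contains no proof of this statement: it is quoted verbatim as Corollary~4.3 of \cite{GMW}, with the qed symbol placed inside the statement precisely to signal that the proof lives in that earlier paper (the surrounding text says the two summation lemmas are ``proved in \cite{GMW} and restated below''). So there is no internal argument to compare against; the fair comparison is with the proof in \cite{GMW}, which does follow essentially the strategy you outline --- compare $n_i$ with Poisson weights $A^i/i!$, treat $1-(i-1)B$ logarithmically, absorb the $\delta_i$ perturbations via the falling-factorial envelope $\sum_j\gamma_j[i]_j$, and control the truncation at $N$ by a geometric tail coming from $Ac<N-K+1$. Your outline has the right architecture.

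As written, though, the sketch has concrete gaps. The most serious is the claim that $\abs{u}<\tfrac12$ for $u=(j-1)B$ is ``secured by $\abs{BN}<1$'': that hypothesis only gives $\abs{(j-1)B}<(N-1)/N$, which can be arbitrarily close to $1$, and for $u$ near $1$ the inequality $\log(1-u)\ge -u-u^2$ fails (it already fails around $u\approx 0.7$, and the left side tends to $-\infty$ as $u\to 1$). So your term-by-term lower bound on $\prod_j(1-(j-1)B_2)$ breaks down exactly where it is needed for $\varSigma_1$; one must argue separately that the indices $i$ for which $(i-1)B$ is not small contribute negligibly, which is part of what the $-\tfrac14(2e/c)^N$ term is for. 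Second, the passage from $\sum_i (A^i/i!)\exp\bigl(-\binom{i}{2}B+\sum_j\gamma_j[i]_j\bigr)$ to the stated closed forms --- including the asymmetric $+\tfrac12 A_2^3B_1^2$, the precise constants $4$ and $3$ in $4\sum_j\gamma_j(3A)^j$, and the fact that $\pm\tfrac14(2e/c)^N$ appears in \emph{both} bounds rather than only the lower one --- is the actual content of the lemma, and you defer it to ``bookkeeping'' that you acknowledge not having carried out. Your tail estimate also needs the factor $2$: $A<(N+1)/c$ gives $(eA/N)^N<(2e/c)^N$, not $(e/c)^N$. None of this suggests the approach is wrong, but the proposal is an outline of the argument in \cite{GMW} rather than a proof, and at least the $\abs{u}<\tfrac12$ step must be repaired before the lower bound is valid.
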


\begin{lemma}[{\cite[Corollary 4.5]{GMW}}]\label{sumcor2}
Let $N\geq 2$ be an integer and, for $1\leq i\leq N$, let real
numbers $A(i)$, $B(i)$ be given such that $A(i)\geq 0$ and
$1-(i-1)B(i) \ge 0$.
Define 
$A_1 = \min_{i=1}^N A(i)$, $A_2 = \max_{i=1}^N A(i)$,
$C_1 = \min_{i=1}^N A(i)B(i)$ and $C_2=\max_{i=1}^N A(i)B(i)$.
Suppose that there exists a real number $\hat{c}$ with 
$0<\hat{c} < \tfrac{1}{3}$ such that 
$\max\{ A/N,\, \abs{C}\} \leq \hat{c}$ 
for all $A\in [A_1,A_2]$,  $C\in [C_1,C_2]$.
Define $n_0,\ldots ,n_N$ by $n_0=1$ and
\[ \frac{n_i}{n_{i-1}} = \frac{A(i)}{i}\(1-(i-1)B(i)\)\]
for $1\leq i\leq N$, with the following interpretation: if $A(i)=0$
or $1-(i-1)B(i)=0$, then $n_j=0$ for $i\leq j\leq N$.  Then
\[ \varSigma_1 \leq \sum_{i=0}^N n_i\leq \varSigma_2\]
where
\begin{align*}
 \varSigma_1 &= \exp\( A_1 - \tfrac{1}{2} A_1 C_2 \)
               - (2e\hat{c})^N,\\
 \varSigma_2 &= \exp\( A_2 - \tfrac{1}{2} A_2 C_1 +
              \tfrac12 A_2 C_1^2 \) + (2e\hat{c})^N.
     \quad\qedsymbol
\end{align*}
\end{lemma}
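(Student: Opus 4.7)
The plan is to sandwich $n_i$ between extremal products in which the varying $A(k),C(k)$ (writing $C(k):=A(k)B(k)$) are replaced by constants, evaluate the resulting sums in closed form via Newton's generalized binomial identity, and finally control the tail of the truncation.

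\textbf{Product form and sandwich.}\, First I would rewrite
\[
  n_i=\frac{1}{i!}\prod_{j=0}^{i-1}\bigl(A(j{+}1)-j\,C(j{+}1)\bigr),
\]
each factor being nonnegative by the hypothesis $1-(j-1)B(j)\ge 0$. Term-by-term comparison with $A_1\le A(k)\le A_2$ and $C_1\le C(k)\le C_2$ then gives
\[
  \frac{1}{i!}\prod_{j=0}^{i-1}(A_1-jC_2)\ \le\ n_i\ \le\ \frac{1}{i!}\prod_{j=0}^{i-1}(A_2-jC_1),
\]
where the lower product is truncated to zero as soon as $A_1-jC_2$ would go negative (since the genuine $n_i\ge 0$ this only strengthens the inequality).

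\textbf{Closed-form evaluation.}\, The extremal series are sums of Newton's generalized binomial type:
\[
  \sum_{i=0}^{\infty}\frac{1}{i!}\prod_{j=0}^{i-1}(a-jc)=(1+c)^{a/c},
\]
valid for $|c|<1$, giving the evaluations $(1+C_1)^{A_2/C_1}$ and $(1+C_2)^{A_1/C_2}$ respectively. Taking logarithms and expanding,
\[
  \tfrac{A}{C}\log(1+C)=A-\tfrac12 AC+\tfrac13 AC^2-\tfrac14 AC^3+\cdots,
\]
and using $|C|\le\hat{c}<\tfrac13$ (so $|C|/(1-|C|)\le\tfrac12$), alternating-series estimates in one sign case and geometric-series domination in the other yield
\[
  A_1-\tfrac12 A_1C_2\ \le\ \tfrac{A_1}{C_2}\log(1+C_2),\qquad \tfrac{A_2}{C_1}\log(1+C_1)\ \le\ A_2-\tfrac12 A_2C_1+\tfrac12 A_2C_1^2.
\]

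\textbf{Truncation correction.}\, To pass from the infinite sums to $\sum_{i=0}^N$, Stirling's bound $i!\ge(i/e)^i$ combined with $A_2/N\le\hat{c}$ and $|C_1|\le\hat{c}$ gives, for $i\ge N$,
\[
  n_i^+\le\frac{(A_2+i\hat{c})^i}{i!}\le\Bigl(\frac{e(A_2+i\hat{c})}{i}\Bigr)^{\!i}\le(2e\hat{c})^i,
\]
and a direct check shows that the ratio $n_{i+1}^\pm/n_i^\pm$ is at most $2\hat{c}<1$ for $i\ge N$ in every sign configuration, so the remaining geometric tail is absorbed into a single additive correction of size $(2e\hat{c})^N$ on each side. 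Combining the three steps gives the stated bounds $\varSigma_1$ and $\varSigma_2$.

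The main obstacle is the sign analysis of $C_1$ and $C_2$: the direction of the inequalities in the log expansion, the truncation of the lower-bound product, and the way $(1+c)^{a/c}$ is bounded all branch on whether each of $C_1,C_2$ is positive or negative. Each of the four sign combinations must be verified individually, but in every case the extremal sums are bounded by the claimed exponentials up to the $(2e\hat{c})^N$ correction.
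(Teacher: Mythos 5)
This lemma is imported verbatim from \cite[Corollary 4.5]{GMW} and carries no proof in this paper, so there is nothing internal to compare against; your argument has to stand on its own. In essentials it does, and it is surely close to the source: the product form $n_i=\frac{1}{i!}\prod_{j=0}^{i-1}\bigl(A(j{+}1)-jC(j{+}1)\bigr)$, the factorwise sandwich between constant-parameter products, the identification of the extremal sums with the Newton series $\sum_i\binom{a/c}{i}c^i=(1+c)^{a/c}$, and the sign-dependent truncation of $\frac{a}{c}\log(1+c)=a-\frac12 ac+\frac13 ac^2-\cdots$ are exactly the mechanism that the shape of $\varSigma_1,\varSigma_2$ points to (note $A^2B=AC$, so these bounds are the same truncations appearing in Lemma~\ref{sumcor}), and each logarithmic inequality you assert does check out for $\abs{C}\le\hat{c}<\tfrac13$.

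Two places need more care than you give them. First, the truncation constant: from $\abs{n_N^\pm}\le(2e\hat{c})^N$ and a ratio of at most $2\hat{c}$ beyond index $N$, the geometric tail sums to $(2e\hat{c})^N\cdot 2\hat{c}/(1-2\hat{c})$, which for $\hat{c}$ near $\tfrac13$ is about $2(2e\hat{c})^N$, not $(2e\hat{c})^N$ as claimed; to land on the stated constant you need a slightly sharper term bound, e.g. $\abs{n_i^\pm}\le\hat{c}^i\binom{N+i-1}{i}\le\tfrac12\,2^N(2\hat{c})^i$, whose tail over $i>N$ is at most $(4\hat{c})^N<(2e\hat{c})^N$. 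Second, in the lower bound with $C_2>0$ the truncated product sum is \emph{not} $(1+C_2)^{A_1/C_2}$: you must show that the omitted terms (the nonnegative ones between $N$ and the truncation point, together with the eventually alternating tail of the Newton series, whose successive absolute ratios are below $1$) subtract at most the same $(2e\hat{c})^N$. You flag the four-way sign analysis but do not carry out this branch, which is the only one where the closed-form evaluation and the truncation interact. Both points are routine repairs rather than structural flaws, so I would call this a correct proof with some constant-tracking left to the reader.
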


We obtain bounds on the ratios we require by applying
results from~\cite{GMW}.  To begin with we focus
on the effect of changing the number of triple pairs while
keeping the number of double pairs fixed.

\begin{lemma}
\label{tswitch}
Suppose $0\le d\le N_2$ and $1<h\le N_3$, with $\C_{d,h}\neq \emptyset$.
If $(S_2,T_2)$ is substantial then
\[ \frac{w(\C_{d,h})}{w(\C_{d,h-1})}
   = \frac{S_3T_3 + O(s^2t^2(st+d+h)S)}{hS^3}.\]
\end{lemma}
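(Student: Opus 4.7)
My plan is to use a switching argument in the pairing model, of the same type employed in~\cite{GMW}. The switching will convert a pairing $P\in\C_{d,h}$ into a pairing $P'\in\C_{d,h-1}$ by ``dissolving'' one triple class into three simple pairs, and the ratio in the lemma will emerge from double-counting the switchings.

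First I would define the switching. A forward switching on $P\in\C_{d,h}$ consists of choosing: (i) one of the $h$ triple classes of $P$, say between cells $x_i$ and $y_j$, together with an ordering of its three constituent pairs as $(a_\ell,b_\ell)$, $\ell=1,2,3$; and (ii) an ordered triple of simple pairs $((c_1,d_1),(c_2,d_2),(c_3,d_3))$ in $P$ such that the $c_\ell$ lie in three distinct cells distinct from $x_i$, the $d_\ell$ lie in three distinct cells distinct from $y_j$, and $P$ contains no pair between $x_i$ and the cell of any $d_\ell$, nor between the cell of any $c_\ell$ and $y_j$. Replacing the six chosen pairs by $\{(a_\ell,d_\ell),(c_\ell,b_\ell):\ell=1,2,3\}$ yields $P'\in\C_{d,h-1}$. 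The reverse switching on $P'$ selects cells $x_i,y_j$ and distinct ordered triples $(a_\ell)$ in $x_i$, $(b_\ell)$ in $y_j$; each $d_\ell$, $c_\ell$ is then determined as the $P'$-partner of $a_\ell$, $b_\ell$, subject to the analogous validity conditions. Every forward switching from $P$ corresponds bijectively to a reverse switching from the resulting~$P'$.

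Second, writing $N_f(P)$ and $N_r(P')$ for the respective counts, I would apply the identity $\sum_{P\in\C_{d,h}}N_f(P) = \sum_{P'\in\C_{d,h-1}}N_r(P')$. Ignoring constraints, $N_f(P)\approx 6h\cdot S^3$ (a factor $h$ for the triple, $6$ for its ordering, and roughly $S^3$ for the ordered triple of simple pairs) and $N_r(P')\approx \sum_{i,j}[s_i]_3[t_j]_3 = S_3T_3$. Combining yields $\ac{d,h}/\ac{d,h-1}\approx S_3T_3/(6hS^3)$, and hence
\[
 \frac{w(\C_{d,h})}{w(\C_{d,h-1})}
   \;=\; 6\,\frac{\ac{d,h}}{\ac{d,h-1}}
   \;\approx\; \frac{S_3T_3}{hS^3},
\]
matching the main term.

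Third, for the error, I would enumerate the sources of deviation. In the forward count, the per-triple loss from the idealized $S^3$ is bounded by $O((d+h)S^2)$ from restricting to the $S-2d-3h$ simple pairs; $O((s+t)S^2)$ from the distinct-cell and avoid-$x_i/y_j$ constraints; and $O(stS^2)$ from each ``no-parallel'' condition (at most $s$ pairs touch $x_i$, each sitting in a $y$-cell of size at most~$t$, and symmetrically). The resulting relative error per triple is $O((st+d+h)/S)$. The reverse count admits a similar relative error, plus a contribution of order $s^3t^3 S$ from ruling out the $(i,j)$ for which $P'$ already has a pair between $x_i$ and $y_j$. Packaging everything, multiplying the absolute error by $S_3T_3\le s^2t^2S^2$, and dividing by $hS^3$ produces the claimed numerator error $O(s^2t^2(st+d+h)S)$.

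The main obstacle is the bookkeeping around the reverse switching's validity conditions: one must verify that every new pair $(c_\ell,d_\ell)$ produced in $P$ is simple (which requires that $P'$ have no pair between the cells of $c_\ell$ and $d_\ell$), and that $P'$ have no pair between $x_i$ and $y_j$. These conditions contribute several different subtraction terms; showing that they combine cleanly into the single bound $O(s^2t^2(st+d+h)S)$ is the main technical step and is where the approach most closely parallels the corresponding arguments in~\cite{GMW}.
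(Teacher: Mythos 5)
Your proposal is correct and takes essentially the same route as the paper: the paper's entire proof consists of the identity $w(\C_{d,h})/w(\C_{d,h-1}) = 6\,\ac{d,h}/\ac{d,h-1}$ together with a citation of \cite[Lemma 4.6]{GMW}, which is precisely the triple-pair switching argument in the pairing model --- with the same forward count $6hS^3$, reverse count $S_3T_3$, and error bookkeeping --- that you reconstruct. The only difference is that you re-derive the cited lemma in-line rather than invoking it.
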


\begin{proof}
This follows from~\cite[Lemma 4.6]{GMW} since, for $h\geq 1$,
\[ \frac{w(\C_{d,h})}{w(\C_{d,h-1})} =\frac{6\,\ac{d,h}}{\ac{d,h-1}}.
\]
Note that the values of $N_2$, $N_3$ used in this paper are no smaller than,
and are at most a constant
factor larger than, the values used in~\cite{GMW}.  For example,
we have $N_3 = \max\(\lceil \log S\rceil,\, \lceil 230000 S_3 T_3/S^3\rceil\)$,
while in~\cite{GMW} the value $\max\( \lceil \log S\rceil,\,
          \lceil 7 S_3 T_3/S^3\rceil\)$ was used.
Examination of the proof of~\cite[Lemma 4.6]{GMW} shows that the
bound given there for 
$\ac{d,h}/\ac{d,h-1}$
also holds for all $0\leq d\leq N_2$ and $1\leq h\leq N_3$. 
\end{proof}

\bigskip

Next, adapting the proof of~\cite[Corollary 4.7]{GMW} gives:

\begin{corollary}
\label{hsum}
\ Suppose $0\le d\le N_2$ with $\C_{d,0}\neq \emptyset$.
Further suppose that
$(S_2,T_2)$ is substantial.  Then
\[
\sum_{h=0}^{N_3} \frac{w(\C_{d,h})}{w(\C_{d,0})}
   = \exp\biggl( \frac{S_3T_3}{S^3}
            + O\(s^2t^2(st+d)/S^2\)\biggr).
\]
\end{corollary}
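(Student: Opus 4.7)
The plan is to apply Lemma~\ref{sumcor} to the partial sum $\sum_{h=0}^{N_3} n_h$, where $n_h = w(\C_{d,h})/w(\C_{d,0})$, so that $n_0 = 1$ and the displayed sum equals $\sum_{h=0}^{N_3} n_h$. By Lemma~\ref{tswitch}, for $1 \le h \le N_3$ with $\C_{d,h}\ne\emptyset$,
$$\frac{n_h}{n_{h-1}} = \frac{A(h)}{h}(1+\delta_h),$$
where one may take $A(h) = S_3T_3/S^3$ (independent of $h$) and $\delta_h = O(s^2t^2(st+d+h)S/(S_3T_3))$. This matches the hypothesis of Lemma~\ref{sumcor} with $A_1 = A_2 = S_3T_3/S^3$, $B_1 = B_2 = 0$, and $N = N_3$.

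To control $\sum_{j=1}^i |\delta_j|$ I would set $K=2$, $\gamma_0=0$, and
$$\gamma_1 = c_1\frac{s^2t^2(st+d)S}{S_3T_3}, \qquad \gamma_2 = c_2\frac{s^2t^2 S}{S_3T_3},$$
for suitable absolute constants $c_1,c_2$; this dominates $\sum_{j=1}^i|\delta_j|$ for $1\le i\le N_3$. Lemma~\ref{sumcor} then yields
$$\sum_{h=0}^{N_3} n_h = \exp\!\left(\frac{S_3T_3}{S^3} + O\bigl(\gamma_1 A + \gamma_2 A^2\bigr)\right) + O\!\left((2e/c)^{N_3}\right),$$
with $A = S_3T_3/S^3$. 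One computes $\gamma_1 A = O(s^2t^2(st+d)/S^2)$, while $\gamma_2 A^2 = O(s^2t^2 S_3T_3/S^5)$ is absorbed into $\gamma_1 A$ using the elementary bounds $S_3 \le s^2 S$, $T_3 \le t^2 S$, together with $st=o(S^{2/3})$. Choosing $c$ to be a sufficiently large absolute constant and invoking $N_3 \ge \lceil \log S\rceil$ makes the tail $(2e/c)^{N_3}$ polynomially small in $S$, hence negligible against $\exp(S_3T_3/S^3)\ge 1$.

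The step I expect to be the main obstacle is checking that the substantiality hypotheses ($S_2 \ge s\log^2 S$, $T_2 \ge t\log^2 S$, $S_2T_2 \ge (st)^{3/2}S$) force every quantitative precondition of Lemma~\ref{sumcor}: namely, $\sum_{j=1}^i |\delta_j| < \tfrac15$ uniformly for $1 \le i \le N_3$, and $0 \le Ac < N_3 - K + 1$. The two regimes in the definition of $N_3$ (depending on whether $\lceil \log S\rceil$ or $\lceil 230000 S_3T_3/S^3\rceil$ dominates) should be treated separately, closely paralleling the proof of \cite[Corollary~4.7]{GMW}.
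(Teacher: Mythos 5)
Your overall strategy---turning Lemma~\ref{tswitch} into a ratio recurrence and feeding it to one of the summation lemmas from \cite{GMW}---is the right one, but the way you distribute the error terms does not work. You set $A(h)=S_3T_3/S^3$, $B=0$, and push the entire additive correction $O\bigl(s^2t^2(st+d+h)S\bigr)$ from Lemma~\ref{tswitch} into the factor $(1+\delta_h)$, so that $\delta_h=O\bigl(s^2t^2(st+d+h)S/(S_3T_3)\bigr)$. Lemma~\ref{sumcor} demands $\sum_{j\le i}\abs{\delta_j}<\tfrac15$ uniformly, and substantiality gives you no lower bound on $S_3T_3$ with which to verify this: the substantiality conditions constrain only $S_2$, $T_2$ and $S_2T_2$. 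For example, with all row sums equal to $2$ except one row of sum $3$ (and similarly for columns) the pair $(S_2,T_2)$ is substantial while $S_3T_3=36$, so already $\delta_1=O\bigl(s^2t^2(st+d+1)S/36\bigr)$ is of order $S$; and if $S_3T_3=0$ your $\delta_h$ is not even defined. The obstacle you flag at the end is therefore not a routine verification but is fatal to this particular decomposition: the error in Lemma~\ref{tswitch} is additive relative to $S_3T_3$ and cannot in general be recast as a small relative error.

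The paper's proof avoids this by never dividing the correction by $S_3T_3$. It writes the ratio in the form $\bigl(A(h)/h\bigr)\bigl(1-(h-1)B(h)\bigr)$ with $A(h)=\bigl(S_3T_3-\alpha_h s^2t^2(st+d)S\bigr)/S^3$ absorbing the $h$-independent part of the correction and $C(h)=A(h)B(h)=\alpha_h s^2t^2/S^2$ absorbing the part that grows with $h$, and then applies the simpler Lemma~\ref{sumcor2}, which has no $\delta$ terms at all. Its hypotheses reduce to $A/N_3\le\hat c$ and $\abs{C}\le\hat c$, which hold because $N_3\ge 230000\,S_3T_3/S^3$ by definition and $s^2t^2/S^2=o(1)$; no lower bound on $S_3T_3$ is needed, and the case $\C_{d,h}=\emptyset$ is handled by setting $A(h)=B(h)=0$ from that point on. If you wish to keep Lemma~\ref{sumcor}, you must make the same move: put the $(st+d)$ part of the error into $A(h)$, the $h$-linear part into $B(h)$, and take $\delta_h=0$; your final arithmetic ($\gamma_1A=O(s^2t^2(st+d)/S^2)$ and the $(2e/c)^{N_3}=O(S^{-2})$ tail) is otherwise fine.
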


\begin{proof}
We will apply Lemma~\ref{sumcor2}.
Let $h'$ be the first value of $h\leq N_3$ for which $\C_{d,h}=\emptyset$, or
$h'=N_3+1$ if there is no such value.
Define $\alpha_h$, $1\le h < h'$, by
\begin{equation}\label{hrat}
\frac{\ac{d,h}}{\ac{d,h-1}} 
  =  \frac{S_3T_3 -  \alpha_h\(s^2t^2(st+d+(h-1)S)\)}{hS^3}.
\end{equation}
Lemma~\ref{tswitch} says that $\alpha_h$ is bounded independently
of $h$, $d$ and~$S$.

For $1\le h < h'$, define
\[ A(h)= \frac{S_3T_3 - \alpha_h(s^2t^2(st+d)S)}{S^3},\quad
   C(h) = \frac{\alpha_h s^2t^2}{S^2}.\]
If $\alpha_h \le 0$ then by definition
$A(h) \geq S_3T_3/S^3$, and $S_3T_3> 0$ 
since $h<h'$. Therefore $A(h) > 0$ in this case.
If $\alpha_h > 0$ then $C(h) > 0$, which implies that $A(h) > 0$ since the
right side of (\ref{hrat}) 
has the same sign as $A(h) - (h-1)C(h)$. 
Therefore $A(h)>0$ whenever $h<h'$.
Define $B(h)=C(h)/A(h)$ for $1\leq h < h'$.
Also define $A(h)=B(h)=0$ for $h'\le h\le N_3$.

Define
$A_1,A_2, C_1,C_2$ by taking the minimum and maximum of the
$A(h)$ and $C(h)$ over $1\leq h\leq N_3$, as in Lemma~\ref{sumcor2}.  
Let $A\in [A_1,A_2]$ and $C\in [C_1,C_2]$, and set
$\hat{c}=\tfrac1{41}$.
Since $A=S_3T_3/S^3+o(1)$ and $C=o(1)$, we have that
$\max\{A/N_3,\abs{C}\}<\hat{c}$  
for $S$ sufficiently large, by the definition of $N_3$.

Therefore Lemma~\ref{sumcor2} applies and says that
\[ \sum_{h=0}^{N_3} \frac{\ac{d,h}}{\ac{d,0}}
   = \exp\biggl( \frac{S_3T_3}{S^3}
            + O\(s^2t^2(st+d)/S^2\)\biggr) + 
                             O\((2e/41)^{N_3}\).\]
Finally, $(2e/41)^{N_3} \leq (2e/41)^{\log S} \leq S^{-2}$.
Since the sum we are estimating is at least equal to one, this additive
error term is covered by the error terms inside the exponential.
This completes the proof.
\end{proof}

\bigskip

Now we must sum over pairings with no triple pairs.

\begin{lemma}
\label{dswitch}
\ Suppose that $(S_2,T_2)$ is substantial and that $1\leq d\le N_2$
with $\C_{d,0}\neq \emptyset$.  Then
\[ 
 \frac{w(\C_{d,0})}{w(\C_{d-1,0})} 
  =  \frac{2A(d)}{d}\(1 - (d-1)B\)
        (1+ \delta_d)
\]
where
\begin{align*}
A(d) &= \frac{S_2T_2}{2S^2} \biggl(1 + \frac{S_2}{S^2} + \frac{T_2}{S^2} + 
 \frac{1}{S} + \frac{2S_3T_2}{S_2S^2} + \frac{2S_2T_3}{S^2 T_2}
- \frac{S_3T_3}{SS_2T_2} - \frac{2 S_2T_2}{S^3} \biggr)
  + O\biggl(\frac{s^3t^3}{S^2}\biggr),\\
B &= 
 \frac{2}{S_2} + \frac{2}{T_2} + \frac{4T_3}{T_2^2} +
       \frac{4S_3}{S_2^2} - \frac{4}{S},\\
\delta_d &= O\biggl(\frac{(d-1)^2s^2}{S_2^2} 
 + \frac{(d-1)^2t^2}{T_2^2} +
                \frac{dst(d+st)}{S_2T_2}\biggr).
\end{align*}
\end{lemma}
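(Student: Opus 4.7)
The plan is a switching argument in the pairing model relating $\C_{d,0}$ and $\C_{d-1,0}$. A \emph{forward} switching takes $P\in\C_{d,0}$, selects one of its $d$ double pairs, picks two auxiliary simple pairs of $P$, and reassigns endpoints so that the chosen double pair is destroyed and no new multi-pair is created, producing some $P'\in\C_{d-1,0}$. A \emph{reverse} switching on $P'\in\C_{d-1,0}$ picks cells $x_i,y_j$ together with two simple pairs of $P'$ whose endpoints can be shuffled to create a new double pair between $x_i$ and $y_j$. Since $w(\C_{d,0})=2^d\,\ac{d,0}$, the standard double-count identity in the bipartite graph of switching instances gives
\[
 \frac{w(\C_{d,0})}{w(\C_{d-1,0})}
 = \frac{2}{d}\cdot
   \frac{\text{average number of reverse switchings of }P'}
        {\text{average number of forward switchings of }P\text{ per double pair}}.
\]

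First, I would count forward switchings. For each of the $d$ double pairs there are $(1+o(1))S^2$ legal choices of auxiliary simple pairs after subtracting those that hit another of the remaining $d-1$ double pairs or would create a new parallel class of multiplicity $\ge 2$. These corrections are of order $sS$, $tS$ and $st(d+st)$, and match those used in the $0$-$1$ analysis of \cite[Lemma~4.6]{GMW}; they ultimately produce part of the $\delta_d$ contribution.

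Next I would count the reverse switchings of $P'$. The leading contribution arises from choosing an ordered pair of points in some row-cell $x_i$ and an ordered pair in some column-cell $y_j$, giving $\tfrac12 S_2 T_2$ before normalisation. From this count one must subtract: (a) choices that accidentally create a triple rather than a double pair, which contribute the $2S_3T_2/(S_2 S^2)$ and $2S_2T_3/(T_2 S^2)$ terms of $A(d)$ and the $4T_3/T_2^2$, $4S_3/S_2^2$ terms of $B$; (b) choices that coincide with or run parallel to an existing double pair, contributing the $(d-1)B$ factor together with the $2/S_2 + 2/T_2 - 4/S$ portion of $B$; and (c) higher-order overlap corrections between forbidden configurations, which produce the $-2S_2T_2/S^3$ and $-S_3T_3/(S S_2 T_2)$ corrections in $A(d)$. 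Dividing by the $S^2$ normalisation coming from the identification of swap partners (and accounting for the ordering factor $2$ in $w(\C_{d,0})/w(\C_{d-1,0})=2\,\ac{d,0}/\ac{d-1,0}$) yields the stated formulas for $A(d)$ and $B$.

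The main obstacle will be carrying the inclusion--exclusion bookkeeping with enough precision to separate the named coefficients in $A(d)$ and $B$ from the remainder, while bounding that remainder by the three terms in $\delta_d$. The bounds $(d-1)^2s^2/S_2^2$ and $(d-1)^2t^2/T_2^2$ will arise from the $\binom{d-1}{2}$ pairs of pre-existing double pairs that can jointly obstruct a reverse switching, and $dst(d+st)/(S_2T_2)$ from the interaction between the choice of auxiliary simple pairs in a forward switching and the ambient forbidden set. The substantiality hypothesis on $(S_2,T_2)$ is exactly what is needed to guarantee that all of the relative error terms produced by this scheme are small. The calculation closely parallels the corresponding reverse-switching computation in \cite{GMW}, with the only genuinely new feature being the more elaborate list of triple-pair corrections arising because $\C_{d,0}$ permits parallel classes of multiplicity $2$ (but not $3$).
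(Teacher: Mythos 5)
Your outline identifies the right mechanism, but it proposes to do far more work than the paper actually does, and the part you defer is the entire content of the lemma. The paper's proof is two lines: the sets $\C_{d,h}$ are defined purely in the pairing model and are \emph{identical} to the sets analysed in~\cite{GMW}, so the unweighted ratio $\ac{d,0}/\ac{d-1,0}=\frac{A(d)}{d}\(1-(d-1)B\)(1+\delta_d)$ is quoted verbatim from \cite[Lemma 4.8]{GMW}, and the only new ingredient is the identity $w(\C_{d,0})=2^d\,\ac{d,0}$, which supplies the extra factor of~$2$. You do write down this identity, and you correctly note that the computation ``closely parallels'' \cite{GMW}, but you then set out to re-derive the switching count from scratch and stop short of doing so: the assignment of each named coefficient in $A(d)$ and $B$ to a specific forbidden configuration is asserted rather than verified, and the inclusion--exclusion needed to justify the error bound $\delta_d$ is explicitly left as ``the main obstacle.'' Since those coefficients and that bound are precisely what the lemma claims, the proposal as written has a genuine gap.

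There is also a conceptual slip worth flagging: you describe the ``only genuinely new feature'' as a more elaborate list of triple-pair corrections arising because $\C_{d,0}$ permits parallel classes of multiplicity~$2$. Nothing of the sort is new relative to \cite{GMW} --- that paper already works with exactly these pairing classes, double and triple pairs included, and already proves this ratio in this exact form; what changes in the present paper is only the weight $w$ attached to each pairing when converting pairings to nonnegative integer matrices rather than to 0-1 matrices. Recognising this collapses your whole programme to the short argument the authors give.
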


\begin{proof}
This follows from~\cite[Lemma 4.8]{GMW} since, for $d\geq 1$,
\[ \frac{w(\C_{d,0})}{w(\C_{d-1,0})} =\frac{2\,\ac{d,0}}{\ac{d-1,0}}. 
\]
As in Lemma~\ref{tswitch}, our value of $N_2$ is no smaller than, and is
at most a constant factor larger than, the value used
in~\cite{GMW}.  Examination of the proof of~\cite[Lemma 4.8]{GMW} shows
that the expression given there for $\ac{d,0}/\ac{d-1,0}$ also holds for
$1\leq d\leq N_2$.
\end{proof}

\bigskip
Adapting the proof of~\cite[Corollary 4.9]{GMW} gives the following:

\begin{corollary}
\label{dsum}
If $(S_2,T_2)$ is substantial then
\[ \sum_{d=0}^{N_2}\sum_{h=0}^{N_3} \frac{w(\C_{d,h})}{w(\C_{0,0})}
=
\exp\biggl( \frac{S_2T_2}{S^2} + \frac{S_2T_2}{S^3} 
                + O\biggl(\frac{s^3t^3}{S^2}\biggr) \biggr).
\]
\end{corollary}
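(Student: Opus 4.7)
The plan is to evaluate the double sum iteratively: first collapse the sum over $h$ via Corollary~\ref{hsum}, then apply the summation machinery of Lemma~\ref{sumcor} to the sum over $d$ using the ratio estimate of Lemma~\ref{dswitch}. The whole argument parallels the proof of Corollary~4.9 of~\cite{GMW}; the only structural difference is that $w(\C_{d,h}) = 2^d 6^h\ac{d,h}$ attaches an extra factor of~$2$ per double pair and~$6$ per triple pair into the ratios.

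First I factor
\[
\sum_{d=0}^{N_2}\sum_{h=0}^{N_3} \frac{w(\C_{d,h})}{w(\C_{0,0})}
= \sum_{d=0}^{N_2} \frac{w(\C_{d,0})}{w(\C_{0,0})}
  \cdot \sum_{h=0}^{N_3}\frac{w(\C_{d,h})}{w(\C_{d,0})}
\]
and apply Corollary~\ref{hsum} to the inner sum, which equals $\exp(S_3T_3/S^3 + O(s^2t^2(st+d)/S^2))$. The $d$-dependent piece of this error is absorbed into $O(s^3t^3/S^2)$ uniformly for $d\le N_2$ using $N_2 = O(S_2T_2/S^2 + \log S)$ together with the substantiality inequalities. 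Setting $n_d = w(\C_{d,0})/w(\C_{0,0})$, Lemma~\ref{dswitch} gives the recursion
\[
\frac{n_d}{n_{d-1}} = \frac{2A(d)}{d}\bigl(1-(d-1)B\bigr)(1+\delta_d),
\]
which matches the form required by Lemma~\ref{sumcor} after the substitution $A(i)\leftarrow 2A(i)$.

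Verifying the hypotheses of Lemma~\ref{sumcor} is routine: $2A(d)$ is tightly concentrated near $S_2T_2/S^2$, the constant $B$ has size $O(1/S_2+1/T_2)$, and the substantiality inequalities guarantee $|BN_2|<1$, that $2A(d)\cdot c < N_2-K+1$ holds for some admissible $c>2e$, that the $\delta_d$ bound (a polynomial in $d$ of fixed degree) is dominated by $\sum_{j=0}^{K}\gamma_j\ff{d}{j}$ for suitable $\gamma_j$ and~$K$, and that the tail $(2e/c)^{N_2}$ is $O(S^{-2})$ since $N_2\ge\log S$. Lemma~\ref{sumcor} then delivers $\sum_d n_d = \exp\bigl(2A - \tfrac12(2A)^2 B + \tfrac12(2A)^3 B^2 + O(\cdot)\bigr)$.

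The final step is to combine this with the $\exp(S_3T_3/S^3)$ factor from the $h$-sum and to show that the resulting exponent collapses exactly to $S_2T_2/S^2 + S_2T_2/S^3 + O(s^3t^3/S^2)$. This rests on a cascade of cancellations between the correction terms in $A(d)$ and~$B$: the $-S_3T_3/(SS_2T_2)$ contribution in $A(d)$ kills the $+S_3T_3/S^3$ from Corollary~\ref{hsum}; the $S_2/S^2$ and $T_2/S^2$ terms in $A(d)$ cancel against the $2/T_2$ and $2/S_2$ contributions of~$B$ that arise in $\tfrac12(2A)^2 B$; the $2S_3T_2/(S_2S^2)$ and $2S_2T_3/(S^2T_2)$ inside $A(d)$ annihilate the $4S_3/S_2^2$ and $4T_3/T_2^2$ contributions of~$B$; and the $-2S_2T_2/S^3$ inside $A(d)$ kills the $-4/S$ in~$B$. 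I expect the main obstacle to lie here: several intermediate terms (for example $S_2^2T_2/S^4$, which can be as large as $s^2t/S$) exceed the target error bound $O(s^3t^3/S^2)$ in magnitude, so only the exact coefficients produced by Lemmas~\ref{dswitch} and~\ref{sumcor} make the cancellations align, and the bookkeeping has to be done with care.
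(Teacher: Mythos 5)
Your overall strategy is the same as the paper's: collapse the $h$-sum via Corollary~\ref{hsum}, feed the ratio from Lemma~\ref{dswitch} into Lemma~\ref{sumcor}, and verify the cancellations in the exponent (your list of which terms of $A(d)$ cancel against which contributions of $\tfrac12(2A)^2B$ is correct and matches the paper's computation). However, there is one genuine gap: the claim that the $d$-dependent error $O\(s^2t^2(st+d)/S^2\)$ from Corollary~\ref{hsum} ``is absorbed into $O(s^3t^3/S^2)$ uniformly for $d\le N_2$.'' This fails in the middle regime $S^{7/4}\le S_2T_2<\tfrac1{5600}S^2\log S$, where $N_2=\lceil\log S\rceil$: there the worst-case error is $O(s^2t^2\log S/S^2)$, and nothing in substantiality forces $\log S=O(st)$. (Take $s=t=2$ regular, $m=n$: then $S_2T_2=S^2$, the pair is substantial, $N_2=\lceil\log S\rceil$, and $s^2t^2\log S/S^2$ is not $O(s^3t^3/S^2)=O(S^{-2})$.) Only the $N_2=O(S_2T_2/S^2)$ regime survives your uniform bound, via $S_2\le sS$, $T_2\le tS$.

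The paper avoids this by \emph{not} taking a uniform maximum over $d$. It writes the inner-sum error as $\xi_d s^2t^2/S^2$ with $\xi_d=O(d)$, sandwiches $\sum_d m_d$ between $\exp\(S_3T_3/S^3+O(s^3t^3/S^2)\)\sum_d n_d(\pm1)$ where $n_d(x)=\frac{w(\C_{d,0})}{w(\C_{0,0})}\exp(x\alpha d s^2t^2/S^2)$, and observes that the resulting \emph{per-step} factor $\exp(x\alpha s^2t^2/S^2)=1+O(s^2t^2/S^2)$ is swallowed by the $O(s^3t^3/S^2)$ error already present in $A(d)$. The accumulated effect is then controlled by the summation lemma itself through $\gamma_1=O\(s^2t^2/(S_2T_2)\)$, which contributes $\gamma_1\cdot 3A_2=O(s^2t^2/S^2)$ to the exponent; in effect the error is weighted by where the mass of the sum sits ($d\approx A=O(S_2T_2/S^2)$) rather than by the worst case $d=N_2$. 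Your argument needs this device (or an equivalent one) to reach the stated error bound; as written, the step would only yield $O\((st+\log S)s^2t^2/S^2\)$.
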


\begin{proof}
We need to apply Lemma~\ref{sumcor} to
the result of Lemma~\ref{dswitch}, and take into account the terms coming
from the triple pairs (as given by Corollary~\ref{hsum}).

Let $d'$ be the first value of $d\leq N_2$ for which $\ac{d,0}=0$,
or $d'=N_2+1$ if no such value of $d$ exists.
Define $m_0,m_1,\ldots,m_{N_2}$ by
$$ m_d = 
  \frac{w(\C_{d,0})}{w(\C_{0,0})}\, \sum_{h=0}^{N_3} \frac{w(\C_{d,h})}{w(\C_{d,0})}
$$
for $0\leq d < d'$, and $m_d=0$ for $d'\leq d\leq N_2$.
Then clearly
\[ \sum_{d=0}^{N_2}\sum_{h=0}^{N_3} \frac{w(\C_{d,h})}{w(\C_{0,0})} =
\sum_{d=0}^{N_2} m_d.\] 
Corollary~\ref{hsum} tells us that for $d< d'$ we have
\begin{equation}
 m_d =  \frac{w(\C_{d,0})}{w(\C_{0,0})}\, 
  \exp\biggl( \frac{S_3T_3}{S^3} + O(s^3t^3/S^2) + \xi_d s^2t^2/S^2\biggl) 
\label{george}
\end{equation}
where $\xi_0=0$ and in general $\xi_d = O(d)$.   (Note that (\ref{george})
is also true for $d'\leq d\leq N_2$, since both sides
equal zero.)
If $\alpha$ is a constant
such that $\abs{\xi_d} \le \alpha d$ for $0\le d\le d'$, then
\begin{equation}\label{msum}
  \exp\biggl( \frac{S_3T_3}{S^3} + O(s^3t^3/S^2)\biggl)
  \sum_{d=0}^{N_2} n_d(-1) \le \sum_{d=0}^{N_2} m_d \le 
 \exp\biggl( \frac{S_3T_3}{S^3} + O(s^3t^3/S^2)\biggl)
 \sum_{d=0}^{N_2}n_d(1)
\end{equation}
where
$$ n_d(x) =  \frac{w(\C_{d,0})}{w(\C_{0,0})}\, \exp\(x \alpha d s^2t^2/S^2\).$$

Next we note that, for $x\in\{-1,1\}$, $n_0(x)=1$, and for
$1\le d\le d'$,
\[
  \frac{n_d(x)}{n_{d-1}(x)} = 2A(d)\(1 - (d-1)B\) \(1 + \delta_d\)
\]
with $A(d)$, $B$, and $\delta_d$ satisfying the expressions given in the
statement of Lemma~\ref{dswitch}. This follows since the factor
$\exp(x \alpha s^2t^2/S^2)$ is covered by the error term on~$A(d)$.
For $d'\leq d\leq N_2$ define $A(d)=0$.

\smallskip

Now let $A_1=A_1(x) = \min_d 2 A(d)$, $A_2=A_2(x)=\max_d 2 A(d)$,
where the maximum and minimum are taken over $1\leq d\leq N_2$.
Also let $B_1=B_2=B$, and $K=3$, and define $c=S^{1/4}$
if $S_2T_2<S^{7/4}$ and $c=41$ otherwise.
The conditions of Lemma~\ref{sumcor} now hold as we will show.
Let $A\in [A_1,A_2]$ be arbitrary.

Clearly $c>2e$.
If $S_2T_2 < S^{7/4}$ then $N_2=22$.  Using the condition
$S_2T_2\ge (st)^{3/2}S$ implied by the substantiality of
$(S_2,T_2)$, we find that $Ac = 1+o(1)$.
For $S_2T_2 \ge S^{7/4}$, $Ac = 41 S_2T_2/S^2(1+o(1))$.
It is also easy to check that $BN_2 = o(1)$.
Thus, in all cases we have that $Ac<N_2-2$ and
$\abs{BN_2}<1$ for sufficiently large~$S$.

If $d= O(S_2T_2/S^2)$ then
\[ \sum_{d=1}^{N_2} \abs{\delta_d} = 
            O\biggl(\frac{s^2 S_2T_2^3}{S^6} +
                    \frac{t^2 S_2^3T_2}{S^6} + 
                     \frac{stS_2^2T_2^2}{S^6} +
                                \frac{s^2t^2S_2T_2}{S^4}\biggr)
     = O\biggl(\frac{s^3t^3}{S^2}\biggr)
     = o(1),
\]
while if $d\leq \lceil \log S\rceil$ then
\[ \sum_{d=1}^{N_2} \abs{\delta_d}
           = O\biggl(\frac{s^2 \log^3 S}{S_2^2} + 
                            \frac{t^2\log^3 S }{T_2^2} +
                            \frac{st\log^3 S}{S_2T_2} + 
                         \frac{s^2t^2\log^2 S}{S_2T_2}\biggr)
            = o(1).
\]

Finally, for $1\le k\le N_2$, we have
\begin{align*}
 \sum_{d=1}^k \abs{\delta_d} &= 
    O\biggl(\, \sum_{d=1}^k (d-1)^2 
        \Bigl({\frac{s^2}{S_2^2}} + {\frac{t^2}{T_2^2}}\Bigr)\biggr)
    + O\biggl(\,\sum_{d=1}^k {\frac{d^2st}{S_2T_2}}\biggr) +  
       O\biggl(\,\sum_{d=1}^k {\frac{ds^2t^2}{S_2T_2}}\biggr) \\
  &= O\biggl(k(k-1)(2k-1)
        \Bigl({\frac{s^2}{S_2^2}} + {\frac{t^2}{T_2^2}}\Bigr)
       + {\frac{k(k+1)(2k+1)st}{S_2T_2}} + 
                         {\frac{k(k+1)s^2t^2}{S_2T_2}}\biggr)\\
    &\le \sum_{j=0}^K \gamma_j \ff kj,
\end{align*}
where
\[ \gamma_0=0,\
 \gamma_1 = O\biggl({\frac{s^2t^2}{S_2T_2}}\biggr),\
\gamma_2 = O\biggl({\frac{s^2}{S_2^2}} + {\frac{t^2}{T_2^2}} +
                     {\frac{s^2t^2}{S_2T_2}}\biggr),\
\gamma_3 = O\biggl({\frac{s^2}{S_2^2}} + {\frac{t^2}{T_2^2}} +
                     {\frac{st}{S_2T_2}}\biggr).\]
Since  $N_2^3(s^2/S_2^2 + t^2/T_2^2 + st/S_2T_2) = o(1)$,
which is easily checked, it 
follows that $\sum_{j=0}^K \gamma_j \ff kj< 1/5$ for $1\leq k\leq N_2$,
when $S$ is large enough.

\smallskip

Therefore the conditions of Lemma~\ref{sumcor} hold,
and we conclude that each of the bounds given by that
lemma for $\sum_{d=0}^{N_2} n_d(x)$ has the form
$$ \exp\biggl(A - \tfrac{1}{2}\,A^2B +
         O\Bigl(A^3B^2 + \sum_{j=0}^3 \gamma_j (3A)^j\Bigr)\biggr) +
              O\((2e/c)^{N_2}\), $$
where $A$ is either $A_1$ or $A_2$.
A somewhat tedious check shows that
$$O(A^3B^2)+\sum_{j=0}^3 \gamma_j (3A)^j=O(s^3t^3/S^2).$$

Next consider the error term $O\((2e/c)^{N_2}\)$.
If $N_2=22$ then $(2e/c)^{N_2} = (2eS^{-1/4})^{22} = O(S^{-2})$,
while in the other cases we have
$(2e/c)^{N_2} = (2e/41)^{N_2}\leq (2e/41)^{\log S} = O(S^{-2})$. 
Since $n_0=1$,
this additive error term is covered by a relative error of the same form.
Therefore, each of the bounds on $\sum_{d=0}^{N_2} n_d(x)$
has the form
\begin{align*}
  \exp\biggl(A - \tfrac{1}{2}\, A^2B + 
      O\biggl(\frac{s^3t^3}{S^2}\biggr)\biggr)
  &= \exp\biggl({\frac{S_2T_2}{S^2}} + {\frac{S_2T_2}{S^3}}
      - {\frac{S_3T_3}{S^3}} 
            + O\Bigl(\frac{s^3t^3}{S^2}\Bigr)\biggr).
\end{align*}
Modulo the given error terms, the final expression does not depend
on~$x$, nor on whether we are taking a lower bound or upper bound
in Lemma~\ref{sumcor}. 
To complete the proof, just apply~(\ref{msum}).
\end{proof}

\bigskip
Corollary~\ref{dsum} and Lemma~\ref{2.3.2} together prove
Theorem~\ref{main} in the substantial case.
The insubstantial case was already proved in Lemma~\ref{easycases2}.

\section{Alternative formulation}\label{s:alternative}

We now derive an alternative formulation of Theorem~\ref{main}.
Recall the definition
of $\hat{\mu}_k$ and $\hat{\nu}_k$ given in the Introduction.

\begin{corollary}\label{munu2}
Under the conditions of Theorem~\ref{main},
\begin{align*}
M(\svec,\tvec) &=
  \frac{\displaystyle\prod_{i=1}^m\binom{n{+}s_i{-}1}{s_i}
   \prod_{j=1}^n\binom{m{+}t_j{-}1}{t_j}}
          {\displaystyle\binom{mn{+}S{-}1}{S}}\\[0.5ex]
  &{\quad}\times \exp\biggl(  
  (1-\hat{\mu}_2)(1-\hat{\nu}_2)\biggl(\frac{1}{2}+
  \frac{3-\hat{\mu}_2\hat{\nu}_2}{4S}\biggr)
\\[0.5ex]
  &{\kern 4.5em} - \frac{(1-\hat{\mu}_2)(3+\hat{\mu}_2-2\hat{\mu}_2\hat{\nu}_2)}{4n}
       - \frac{(1-\hat{\nu}_2)(3+\hat{\nu}_2-2\hat{\mu}_2\hat{\nu}_2)}{4m}
  \\[0.5ex]
  &{\kern 4.5em} + \frac{(1-3{\hat{\mu}_2}^2+2\hat{\mu}_3)
           (1-3{\hat{\nu}_2}^2+2\hat{\nu}_3)}{12S}
              +O\biggl(\frac{s^3t^3}{S^2}\biggr)\biggr).
\end{align*}
\end{corollary}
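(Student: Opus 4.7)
The two expressions for $M(\svec,\tvec)$ given by Theorem~\ref{main} and Corollary~\ref{munu2} share a common value but differ in their prefactors and exponents. Let $F$ denote the exponent appearing in Theorem~\ref{main} and $G$ the one claimed in Corollary~\ref{munu2}. On forming the ratio of the two prefactors, the factorials $S!$, $\prod s_i!$ and $\prod t_j!$ cancel, leaving
\[
R := \frac{\prod_{i=1}^m (n+s_i-1)!/(n-1)! \cdot \prod_{j=1}^n (m+t_j-1)!/(m-1)!}{(mn+S-1)!/(mn-1)!}.
\]
It therefore suffices to prove that $G = F - \log R + O(s^3 t^3/S^2)$, with $G$ taking the advertised closed form.

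The first task is an asymptotic computation of $\log R$. Writing $\log\frac{(n+s_i-1)!}{(n-1)!} = s_i \log n + \sum_{k=0}^{s_i-1}\log(1+k/n)$ and applying $\log(1+x) = x - x^2/2 + O(x^3)$, I observe that the hypothesis $1\le st = o(S^{2/3})$ together with $s\ge S/m$ and $t\ge S/n$ yields $1/n \le t/S$, $1/m \le s/S$ and $\lambda := S/(mn) \le st/S$, so the ratios $s/n$, $t/m$ and $\lambda$ are all $o(S^{-1/3})$. Truncation after the quadratic Taylor term therefore produces cumulative error $O\bigl(\sum_i s_i(s_i/n)^3\bigr) = O(Ss^3/n^3) = O(s^3t^3/S^2)$, and analogously for the column and denominator expansions. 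Using $\sum_{k=0}^{s-1} k = [s]_2/2$ and $\sum_{k=0}^{s-1} k^2 = [s]_3/3 + [s]_2/2$, and observing the cancellation $S\log n + S\log m - S\log(mn) = 0$, one obtains
\[
\log R = \frac{S_2}{2n} + \frac{T_2}{2m} - \frac{[S]_2}{2mn} - \frac{2S_3+3S_2}{12 n^2} - \frac{2T_3+3T_2}{12 m^2} + \frac{2[S]_3+3[S]_2}{12(mn)^2} + O\!\left(\frac{s^3 t^3}{S^2}\right).
\]

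Substituting this alongside the expression for $F$ from Theorem~\ref{main} yields an explicit formula for $G = F - \log R$ in the variables $S, S_2, S_3, T_2, T_3, m, n$. The final step is to re-express this in $\hat\mu_k,\hat\nu_k$ notation. From $\sigma_k := \sum_i(s_i - S/m)^k = S(1+\lambda)\hat\mu_k$ (and symmetrically for $\tau_k$, $\hat\nu_k$) one derives
\[
S_2 = S\bigl[(1+\lambda)\hat\mu_2 + S/m - 1\bigr], \qquad
S_3 = S\bigl[(1+\lambda)\hat\mu_3 + 3(S/m-1)(1+\lambda)\hat\mu_2 + (S/m-1)(S/m-2)\bigr],
\]
with the analogous identities for $T_2, T_3$. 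Substituting these and collecting terms by their denominator ($1$, $1/m$, $1/n$, $1/S$) eventually produces the factored combinations $(1-\hat\mu_2)(1-\hat\nu_2)$ and $(1-3\hat\mu_2^2+2\hat\mu_3)(1-3\hat\nu_2^2+2\hat\nu_3)$ in precisely the arrangement claimed. The principal obstacle is purely algebraic bookkeeping: neither $\hat\mu_k$ nor $\hat\nu_k$ need be small, so the polynomial identity between the two exponent forms must be verified exactly rather than asymptotically, with the error $O(s^3t^3/S^2)$ absorbing only genuine truncation remainders from the Taylor expansion and Theorem~\ref{main}.
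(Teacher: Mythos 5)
Your proposal is correct and follows essentially the same route as the paper's proof: expand the binomial-coefficient prefactor asymptotically (your $\log R$ agrees term-by-term with the paper's three displayed approximations of $\prod_i\binom{n+s_i-1}{s_i}$, $\prod_j\binom{m+t_j-1}{t_j}$ and $\binom{mn+S-1}{S}$), divide into Theorem~\ref{main}, and convert $S_2,S_3,T_2,T_3$ to the scaled central moments $\hat{\mu}_k,\hat{\nu}_k$. The paper is exactly as terse as you are about the final collection of terms, so no gap relative to its own standard of proof.
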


\begin{proof}
By Stirling's formula or otherwise,
$$\binom{N{+}x{-}1}{x} = \frac{N^x}{x!}\exp\biggl(
    \frac{\ff x2}{2N} -  \frac{\ff x3}{6N^2}  -  \frac{\ff x2}{4N^2} +
        O(x^4/N^3)\biggr)$$
as $N\to\infty$, provided that the error term is bounded. 
This gives us the approximations
\begin{align*}
  \prod_{i=1}^{m} \binom{n{+}s_i{-}1}{s_i} &= \frac{n^S}{\prod_i s_i!}
     \exp\biggl( \frac{S_2}{2n} - \frac{S_2}{4n^2} - \frac{S_3}{6n^2}
     + O\biggl(\frac{s^3t^3}{S^2}\biggr)\biggr)\\[0.4ex]
  \prod_{j=1}^{n} \binom{m{+}t_j{-}1}{t_j} &= \frac{m^S}{\prod_j t_j!}
     \exp\biggl( \frac{T_2}{2m} - \frac{T_2}{4m^2} - \frac{T_3}{6m^2}
     + O\biggl(\frac{s^3t^3}{S^2}\biggr)\biggr)\\[0.4ex]
  \binom{mn{+}S{-}1}{S} &= \frac{(mn)^S}{S!}
     \exp\biggl( \frac{S^2}{2mn} - \frac{S}{2mn} - \frac{S^3}{6m^2n^2}
     + O\biggl(\frac{s^3t^3}{S^2}\biggr)\biggr).
\end{align*}
Substitute these expressions into Theorem~\ref{main}
and replace $S_2, S_3, T_2, T_3$ by their equivalents in terms of
$\hat{\mu}_2,\hat{\mu}_3,\hat{\nu}_2,\hat{\nu}_3$.
The desired result is obtained.
\end{proof}

\medskip

As noted in the Introduction, Theorem~\ref{main} establishes the
conjecture recalled after Theorem~\ref{CMmain} in some cases.
Using Corollary~\ref{munu2}, the following is easily seen.
(Note that $\hat{\mu}_2 = \hat{\mu}_3 = \hat{\nu}_2 = \hat{\nu}_3 = 0$ 
in the semiregular case.)

\begin{corollary}\label{Delta}
 If\/ $s=s(m,n)$ and $t=t(m,n)$ satisfy $ms=nt$ and
 $st=o\((mn)^{1/5}\)$, then
 \[ \Deltait(m,s;n,t)
   = \frac{5(s+t)}{6st}\(1+o(1)\).
        \quad\qedsymbol \]
\end{corollary}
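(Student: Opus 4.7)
The plan is to extract $\Deltait(m,s;n,t)$ by equating two asymptotic expressions for $M(m,s;n,t)$: the explicit form from Theorem~\ref{CMmain} and the semiregular specialisation of Corollary~\ref{munu2}. In the semiregular case all of $\hat{\mu}_2,\hat{\mu}_3,\hat{\nu}_2,\hat{\nu}_3$ vanish, so the exponential in Corollary~\ref{munu2} collapses to $\exp\bigl(\tfrac{1}{2} + \tfrac{5}{6S} - \tfrac{3}{4m} - \tfrac{3}{4n} + O(s^3t^3/S^2)\bigr)$, after combining the two $1/S$-order terms. The binomial prefactor in Corollary~\ref{munu2} is identical to the one in Theorem~\ref{CMmain} (since $\lambda mn = S$), so this prefactor will cancel cleanly when the two formulas are compared.

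Next, I would Taylor-expand the elementary factors in Theorem~\ref{CMmain}, using $\log(1+1/m) = 1/m - 1/(2m^2) + O(1/m^3)$, to obtain
\[
  \Bigl(\frac{m+1}{m}\Bigr)^{(m-1)/2}\Bigl(\frac{n+1}{n}\Bigr)^{(n-1)/2}
   = \exp\bigl(1 - \tfrac{3}{4m} - \tfrac{3}{4n} + O(1/m^2 + 1/n^2)\bigr).
\]
Setting the two expressions for $M(m,s;n,t)$ equal, taking logarithms, and cancelling the common terms on both sides, I expect
\[
 \frac{\Deltait(m,s;n,t)}{m+n} = \frac{5}{6S} + O\Bigl(\frac{1}{m^2} + \frac{1}{n^2} + \frac{s^3t^3}{S^2}\Bigr).
\]
Multiplication by $m+n = S/s + S/t = S(s+t)/(st)$ then yields the main term $5(s+t)/(6st)$.

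The remaining task is to confirm that every error contribution is $o\bigl((s+t)/st\bigr)$. The hypothesis $st = o((mn)^{1/5})$, combined with $mn = S^2/(st)$, rearranges to $st = o(S^{1/3})$; in particular $\max\{s,t\} \le st = o(S^{1/3})$, giving $s^2, t^2 = o(S^{2/3})$ and $s^3 t^3 = o(S)$. Substituting $m=S/s$ and $n=S/t$, one checks that $(m+n)/m^2$, $(m+n)/n^2$ and $(m+n)s^3t^3/S^2$ are each $o((s+t)/st)$, so the displayed ratio simplifies to $5/(6S)\cdot(1+o(1))$ and the corollary follows.

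The principal (and only) obstacle is bookkeeping. Any serious obstruction would have had to come from a missing $1/S$-order term in one of the two expansions, so my main worry is tracking the $\tfrac{5}{12m^2}+\tfrac{5}{12n^2}$ contribution from the expansion of $(1+1/m)^{(m-1)/2}(1+1/n)^{(n-1)/2}$ and confirming that it is dominated by $5/(6S)$ under the sharper hypothesis; the estimate $s^2,t^2 = o(S^{2/3})$ already implies this. Otherwise the argument is simply an audit of two precise asymptotic expansions for the same quantity.
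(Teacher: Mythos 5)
Your proposal is correct and follows the same route as the paper: specialise Corollary~\ref{munu2} to the semiregular case (where $\hat{\mu}_2=\hat{\mu}_3=\hat{\nu}_2=\hat{\nu}_3=0$), match it against the defining formula for $\Deltait$ in Theorem~\ref{CMmain}, and check that the hypothesis $st=o\((mn)^{1/5}\)$ (equivalently $st=o(S^{1/3})$) makes all error terms negligible against $5(s+t)/(6st)$. Your bookkeeping of the exponent ($\tfrac12+\tfrac{5}{6S}-\tfrac{3}{4m}-\tfrac{3}{4n}$) and of the expansion of the $\((m+1)/m\)^{(m-1)/2}\((n+1)/n\)^{(n-1)/2}$ factor is accurate.
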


Most of the terms inside the exponential of Corollary~\ref{munu2}
are tiny unless at least one of
$\hat{\mu}_2$, $\hat{\nu}_2$ 
is quite large (that is, the graph is very far from
semiregular).  In particular we can now prove Corollary~\ref{nearreg}
which was stated in the Introduction.

\begin{proof}[Proof of Corollary~\ref{nearreg}]
It is only necessary to check that the additional terms in
Corollary~\ref{munu2} have the required size.
It helps to realise that $\hat{\mu}_2\le s$,
$\abs{\hat{\mu}_3}\le s\hat{\mu}_2$, $\hat{\nu}_2\le t$
and $\abs{\hat{\nu}_3}\le t\hat{\nu}_2$.
\end{proof}

\bigskip

A random nonnegative $m\times n$ matrix with entries summing to~$S$
is just a random composition of $S$ into $mn$ parts.  (A composition
is an ordered sum of nonnegative numbers.)  
In particular, for $1\leq i\leq m$ the row sum $s_i$ satisfies
\[ \Pr(s_i = k) = \binom{k+n-1}{k}\binom{S - k + (m-1)n - 1}{S-k}
              \bigg/ \binom{S+mn-1}{S}  \quad\quad  (0\leq k\leq S).\]
{}From this we can 
compute the following expected values.
\begin{align*}
 \E\,\hat\mu_2 &= \frac{n(m-1)}{mn+1},
  & \E\,\hat\nu_2 &= \frac{m(n-1)}{mn+1}, \\
 \E\,\hat\mu_3 &= \frac{n(m-1)(m-2)(mn+2S)}{m(mn+1)(mn+2)}, &
 \E\,\hat\nu_3 &= \frac{m(n-1)(n-2)(mn+2S)}{n(mn+1)(mn+2)}.
\end{align*}
The first two expectations suggest that the argument of the
exponential in Corollary~\ref{nearreg} is close to~0 with high
probability for such a random matrix.  We will prove this in a
future paper, and note that the result gives a model for the
row and column sums of random matrices.

\section{Restricted sets of allowed entries}\label{s:restricted}

Given a subset $\J$ of the nonnegative integers,
let $\M(\svec,\tvec,\J)$ denote the set of 
matrices in $\Mst$ with all entries in the
set $\J$.  
Let $M(\svec,\tvec,\J)=\card{\M(\svec,\tvec,\J)}$.
By generalising the techniques of the preceding sections, we
can find an asymptotic expression for 
$M(\svec,\tvec,\J)$ whenever $0,1\in \J$.   

\begin{lemma} 
\label{restricted}
Let $\J\subseteq \mathbb{N}$ with $0,1\in\J$.
Define $\chi_2=0$ if\/ $2\notin\J$, $\chi_2=1$ if\/ $2\in\J$, 
and similarly $\chi_3$.
Then
\begin{align*}
 M(\svec,\tvec,\J) &= N(\svec,\tvec)
   \exp\biggl(\chi_2\frac{S_2T_2}{S^2}
             +\chi_2\frac{S_2T_2}{S^3}
             +(\chi_3-\chi_2)\frac{S_3T_3}{S^3}
             + O\biggl(\frac{s^3t^3}{S^2}\biggr)\biggr) \\
 &= \frac{S!}{\prod_{i=1}^m s_i!\, \prod_{j=1}^n t_j!}
   \exp\biggl((\chi_2-\tfrac12)\frac{S_2T_2}{S^2}
             +(\chi_2-\tfrac12)\frac{S_2T_2}{S^3}
             +(\chi_3-\chi_2+\tfrac13)\frac{S_3T_3}{S^3}\\
&\hspace{36mm}
      - \frac{S_2T_2(S_2+T_2)}{4S^4} - \frac{ S_2^2T_3+S_3T_2^2}{2S^4}
      + \frac{ S_2^2T_2^2}{2S^5} + O\biggl(\frac{s^3t^3}{S^2}\biggr)\biggr).
\end{align*}
\end{lemma}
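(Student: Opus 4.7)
The plan is to adapt the proof of Theorem~\ref{main} (as carried out in Sections~\ref{s:matrices} and~\ref{s:pairings}) to the restricted entry set $\J$. The key observation is that in the pairing model a pairing $P$ corresponds to a matrix in $\M(\svec, \tvec, \J)$ precisely when its multiplicity vector $\avec(P)$ satisfies $a_r = 0$ for every $r \geq 2$ with $r \notin \J$. By~\eqref{meM}, $M(\svec, \tvec, \J)$ is therefore the same weighted pairing sum as for $M(\svec, \tvec)$, with all pairings of forbidden multiplicity suppressed.

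I first establish restricted analogues of Lemmas~\ref{no4},~\ref{notmany23} and~\ref{easycases}. The obstacle here is that a generic $D$-switching transforms $q_\ell \to q_\ell - 1$ on the diagonal positions, which may leave $\J$. To circumvent this I restrict attention to $D$-switchings in which every diagonal $q_\ell$ equals $1$, so the only matrix values affected are $D$, $1$ and $0$, all of which lie in $\J$ (using $0, 1 \in \J$). In the sparse regime at least $S - o(S)$ matrix entries equal $1$, so the restricted count in Lemma~\ref{ab} is only a $1 + o(1)$ factor smaller than the unrestricted count. The telescoping/geometric-decay arguments of Lemma~\ref{no4} then still produce the $O(s^3 t^3/S^2)$ bound for matrices in $\M(\svec, \tvec, \J)$ with an entry exceeding~$3$, and the corresponding restricted switchings yield analogous bounds on the number of entries equal to $2$ or~$3$.

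With these reductions in hand, the insubstantial case is handled by imitating Lemma~\ref{easycases2} and summing only over pairing configurations compatible with $\J$, giving
\[
  M(\svec, \tvec, \J) = \frac{S!}{\prod_{i=1}^m s_i!\prod_{j=1}^n t_j!}\bigl(p_0 + 2\chi_2 p_1 + 4\chi_2 p_2 + 6\chi_3 p_3 + O(s^3 t^3/S^2)\bigr)
\]
with $p_0, p_1, p_2, p_3$ as in Lemma~\ref{easycases2}. For the substantial case the analogue of Lemma~\ref{2.3.2} is
\[
  M(\svec, \tvec, \J) = N(\svec, \tvec) \sum_{d=0}^{\chi_2 N_2}\sum_{h=0}^{\chi_3 N_3} \frac{w(\C_{d, h})}{w(\C_{0, 0})} \bigl(1 + O(s^3 t^3/S^2)\bigr),
\]
and I treat the four cases $(\chi_2, \chi_3) \in \{0, 1\}^2$ separately: for $\chi_2 = \chi_3 = 1$ apply Corollary~\ref{dsum}; for $\chi_2 = 0$, $\chi_3 = 1$ use Corollary~\ref{hsum} with $d = 0$, giving $\exp(S_3 T_3/S^3 + O(s^3 t^3/S^2))$; for $\chi_2 = 1$, $\chi_3 = 0$ extract from the proof of Corollary~\ref{dsum} the identity
\[
  \sum_{d=0}^{N_2} \frac{w(\C_{d, 0})}{w(\C_{0, 0})} = \exp\Bigl(\frac{S_2 T_2}{S^2} + \frac{S_2 T_2}{S^3} - \frac{S_3 T_3}{S^3} + O(s^3 t^3/S^2)\Bigr),
\]
which is exactly what the $n_d(x)$ analysis in that proof establishes before combining with the triple-pair factor; and for $\chi_2 = \chi_3 = 0$ only the $(d, h) = (0, 0)$ term survives. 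In all four cases the exponent collapses to $\chi_2 S_2 T_2/S^2 + \chi_2 S_2 T_2/S^3 + (\chi_3 - \chi_2) S_3 T_3/S^3 + O(s^3 t^3/S^2)$, giving the first form in the lemma; the second form then follows by substituting the expression for $N(\svec, \tvec)$ provided by Theorem~\ref{main01}.

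The main obstacle I anticipate is the first step: verifying that the $q_\ell = 1$ restriction on the $D$-switchings does not inflate any error terms in Lemmas~\ref{no4},~\ref{notmany23} or~\ref{easycases}, since the ratio bounds in Lemma~\ref{ab} must still apply with $K$ replaced by the count of $1$-entries rather than the count of all nonzero entries of size at most~$D$. Once this is granted, the insubstantial case is a direct algebraic rearrangement of the $p_i$ and the substantial case reduces to the specialization of summation identities already proved in Section~\ref{s:pairings}.
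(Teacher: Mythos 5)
Your reduction to the pairing model, the treatment of the insubstantial case via $p_0 + 2\chi_2 p_1 + 4\chi_2 p_2 + 6\chi_3 p_3$, and the four-way case split on $(\chi_2,\chi_3)$ in the substantial case all match the paper's argument. The gap is exactly at the point you flag as ``the main obstacle'': you propose to run the analogues of Lemmas~\ref{no4} and~\ref{notmany23} using only $D$-switchings restricted to $q_1=\cdots=q_D=1$, justified by the assertion that in the sparse regime at least $S-o(S)$ matrix entries equal~$1$. That assertion is not available at this stage of the argument --- it is a consequence of the very lemmas you are trying to prove, not a hypothesis. A matrix of $\M(\svec,\tvec,\J)$ can have $n_1(Q)=0$ (for instance, all entries in $\{0,3\}$ when $3\in\J$ and all $s_i,t_j$ are divisible by~$3$), in which case no restricted switching applies to it at all, the lower bound $a(v)$ in Theorem~\ref{t:FM} is zero, and the ratio argument yields nothing. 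The matrices whose abundance you must bound are precisely those for which ``most entries are~$1$'' may fail.

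The paper closes this gap with an intermediate stage that your proposal is missing. It works in the enlarged class $\M^+=\M(\svec,\tvec,\J\cup\{4,5,\ldots\})$ and uses four special switchings that never create a forbidden value: one reducing entries $\ge 5$ pairwise (the new values remain $\ge 4$), and three that replace a diagonal of $k$ entries equal to $k$ by a $k\times k$ all-ones block, for $k=4,3,2$. These bring each of $n_2(Q)$, $n_3(Q)$, $n_4(Q)$, $n_{\ge5}(Q)$ below $S^{5/6}$, establishing $M^+-M^-=o(M^--M^\ast)$; only for matrices in the resulting class $\M^-$ does one know $n_1(Q)=S-o(S)$, and only then can the restricted $q_\ell=1$ switchings be deployed as in Lemmas~\ref{no4} and~\ref{notmany23} to reach $\M^\ast$. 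Without this stage your first step does not go through. The remainder of your proposal --- the evaluation of $M^\ast$ via Corollaries~\ref{hsum} and~\ref{dsum} and the final algebra --- is sound and agrees with the paper.
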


\begin{proof}
Our general approach will be similar to that we used for
Theorem~\ref{main}, but
the methods of Section~\ref{s:matrices} will need significant
modification.  The source of the problem is that a $D$-switching
may introduce an entry that is not in~$\J$.

For $Q\in\Mst$ and $i\ge 1$, let $n_i(Q)$ be the number of entries
of~$Q$ equal to~$i$.  Also let $n_{\ge5}(Q)=\sum_{i\ge5} n_i(Q)$.
Define $N_2$ and $N_3$ as in Section~\ref{s:matrices} when
$(S_2,T_2)$ is substantial, and $N_2=2$ and $N_3=1$ when
$(S_2,T_2)$ is insubstantial.
For $Q\in\M^+$, let
\[  E^+(Q) = \sum_{D > \lceil (st)^{1/4}\rceil} n_D(Q),\qquad
   E^-(Q) = \sum_{D=5}^{\lceil (st)^{1/4}\rceil} n_D(Q).
\]
Consider the following subsets of~$\Mst$:
\begin{align*}
  \M^+ &= \M(\svec,\tvec,\J\cup\{4,5,6,\ldots\,\}), \\
  \M &= \M(\svec,\tvec,\J), \\
  \M^- &= \bigl\{ Q\in\M^+ \bigm| n_2(Q),\, n_3(Q),\, n_4(Q)\leq S^{5/6},
       \\
     & \hspace*{3cm} 
   \,\, E^-(Q) \leq \lceil 2(stS)^{1/2}\rceil, \,\,
        E^+(Q) \leq \lceil 2(st)^{1/4}\, S^{1/2}\rceil\},\\
  \M^\ast &= \bigl\{ Q\in\M(\svec,\tvec,\J\cap\{0,1,2,3\}) \bigm| n_2(Q)\le
                          N_2,\,\, n_3(Q)\le N_3\bigr\}.
\end{align*}
Also define the cardinalities $M^+, M, M^-, M^\ast$, respectively.
By monotonicity, we have $M^\ast\le M\le M^+$ and
$M^\ast\le M^-\le M^+$.

We now employ switchings to establish that $M^+{-}M^-< M^-{-}M^\ast$
and $M^-{-}M^\ast =O(s^3t^3/S^2)M^\ast$, from which it follows that
$M=\(1+O(s^3t^3/S^2)\)M^\ast$. {\bf (OK?)}

We start with the claim that $M^+{-}M^-< M^-{-}M^\ast$.  
Let $Q\in\M^+ - \M^-$ such that
$E^+(Q) > \lceil 2(st)^{1/4} S^{1/2}\rceil$.
We will use the following switching, illustrated by this
operation performed on submatrices:
\begin{equation}
\label{Qswitching}
 \begin{pmatrix} D_1 & 0\\ 0 & D_2\end{pmatrix}
   \mapsto \begin{pmatrix} D_1 - 1 &  1\\
                         1 & D_2 - 1
   \end{pmatrix}
\end{equation}
where $D_1, D_2\geq (st)^{1/4}$.  The number of forward
switchings is bounded below by
\[ E^+(Q)^2 - O\(st E^+(Q)\) = E^+(Q)(1-o(1)),\]
and the number of reverse switchings is bounded above by
\[ \frac{2stS}{(st)^{1/2}} = 2\sqrt{st}\, S.\]
Hence the number of reverse switchings divided by the
number of forward switchings is bounded above by
\[ 
  \frac{2(1+o(1))\sqrt{st} S}{E^+(Q)^2} \leq
   \frac{1+o(1)}{2}  < \dfrac{2}{3},
\]
using the assumed lower bound on $E^+(Q)$.
After repeatedly applying this switching, we reach a matrix $Q$
which satisfies 
\begin{equation}
\label{aim}
E^+(Q) \leq \lceil 2(st)^{1/4} S^{1/2}\rceil.
\end{equation}

The next switching is applied to matrices $Q\in \M^+$ for
which (\ref{aim}) holds 
but $E^-(Q) > \lceil 2(stS)^{1/2}\rceil$.
The switching that we used is the same as that shown in
(\ref{Qswitching}) except that now 
$D_1,D_2\in \{ 5,\ldots, \lceil (st)^{1/4}\rceil\}$.
The number of forward switchings is bounded below by
\[ E^-(Q)^2 - O\( st E^-(Q)\) = E^-(Q)^2(1-o(1)),\]
and the number of reverse switchings is bounded above
by
\[ 2st S.\]
Hence the number of reverse switchings divided by the number of
forward switchings is bounded above by
\[ \frac{2(1+o(1))stS}{E^-(Q)^2} \leq \frac{1+o(1)}{2} < \dfrac{2}{3}.
\]
We apply this switching until we reach a matrix $Q$ which 
satisfies both (\ref{aim})  and
\begin{equation}
\label{aim2}
E^-(Q) \leq \lceil 2(stS)^{1/2}\rceil.
\end{equation}

To analyse these two switchings using Theorem~\ref{t:FM},
we can define the sets
\[ C(i) = \{ Q\in \M^+ - \M^- \mid \sum_{D\geq 2} D n_D(Q) = i\}.\]
If $Q\in C(i)$ and $R$ can be obtained from $Q$ using one of the
switchings described above, then $R\in C(i-2)$.  This leads
to an acyclic directed graph in each case, and we have shown above
that all the ratios $b(v_i)/a(v_{i-1})$ in Theorem~\ref{t:FM}
are at most $2/3$.

Next we need to reduce each of $n_2(Q), n_3(Q), n_4(Q)$ to
below $S^{5/6}$.  We achieve this using a succession of three types of
switchings, illustrated by the following operations on submatrices:
for example, the switching
\[
  \begin{pmatrix} 4 & 0 & 0 & 0\\         
                  0 & 4 & 0 & 0\\
                  0 & 0 & 4 & 0\\
                  0 & 0 & 0 & 4\end{pmatrix}
          \mapsto
                  \begin{pmatrix} 1 & 1 & 1 & 1\\ 1 & 1 & 1 & 1\\
                             1 & 1 & 1 & 1\\ 1 & 1 & 1 & 1\end{pmatrix}
\]
will be used to reduce $n_4(Q)$
(with analogous operations for $D=2,3$).
First we apply the
switching for $D=4$ until $n_{4}(Q)\le S^{5/6}$, then the switching
for $D=3$ until $n_3(Q)\le S^{5/6}$, finally applying
the switching for $D=2$ until $n_{2}(Q)\le S^{5/6}$.  
As a representative example, take the switching for $D=4$.
By counting similarly to Lemma~\ref{ab}, this switching can
be applied to $Q$ in at least $(n_4(Q)-O(st))^4$ ways, and the
inverse can be applied in at most $Ss^3t^3$ ways.  For
$n_4(Q)>S^{5/6}$, the condition $s^3t^3=o(S^2)$ implies that
$Ss^3t^3=o\((n_4(Q)-O(st))^4\)$, so the ratios 
denoted by $b(v_i)/a(v_{i-1})$ in Theorem~\ref{t:FM} are all 
$o(1)$.  

Since none of the 
switchings can undo the work of a previous switching, the end
result is a matrix $\M^-\setminus\M^\ast$. 
(Note that in the resulting matrix $R$,  at least one of
$E^+(R)$, $E^-(R)$, $n_2(R)$, $n_3(R)$ or $n_4(R)$ will be
just under the threshold value. 
This implies that $R\not\in\M^\ast$.)
This establishes the bound $M^+{-}M^-< M^-{-}M^\ast$.

For any matrix $Q\in\M^-\setminus\M^\ast$ we have
\[ \sum_{D\geq \lceil (st)^{1/4}\rceil} D n_D(Q)
  \leq \min\{s,t\} E^+(Q) \leq 3(st)^{3/4}\, S^{1/2},
\]
since using (\ref{aim}) and since $\min\{ s,t\} \leq (st)^{1/2}$.  
Similarly, (\ref{aim2}) implies that
\[ \sum_{D=5}^{\lceil (st)^{1/4}\rceil} D n_D(Q)
  \leq \lceil (st)^{1/4}\rceil E^-(Q) \leq  3(st)^{3/4} S^{1/2},
\]
which leads to
\[ \sum_{D\geq 2} Dn_D(Q) \leq 6(st)^{3/4} S^{1/2} + 3S^{5/6} = o(S).
\]
Hence when $Q\in\M^-\setminus\M^\ast$, we know that $n_1(Q)=S-o(S)$.
We can now continue precisely as in Lemmas~\ref{no4},~\ref{notmany23},
using $D$-switchings restricted to $q_1=\cdots=q_D=1$.
This restriction ensures that $D$-switchings only create entries
with value equal to 0 or 1.  The various switching counts can be
taken as essentially the same as before, since
all but a vanishing fraction of the non-zero entries are~1.
We conclude that $M^-{-}M^\ast =O(s^3t^3/S^2)M^\ast$ which, as noted above, 
implies that $M=\(1+O(s^3t^3/S^2)\)M^\ast$.

Having now reduced the task to evaluation of $M^\ast$, we can
complete the proof following Lemma~\ref{easycases2} in the
insubstantial case, and Section~4 in the substantial case.
In Lemma~\ref{easycases2} the only modification is to replace
the expression $p_0 + 2p_1 + 4p_2 + 6p_3$ by
$p_0 + 2\chi_2\, p_1 + 4\chi_2\, p_2 + 6\chi_3\, p_3$.

Now suppose that $(S_2,T_2)$ is substantial.
If $\chi_2=\chi_3$ then the result is given by either
Theorem~\ref{main01} or Theorem~\ref{main}.
If $\chi_2=0$ and $\chi_3=1$ then the result follows from applying
Corollary~\ref{hsum} with $d=0$, since arguing as in Lemma~\ref{2.3.2}
shows that
\[ M(\svec,\tvec,\J) = 
     N(\svec,\tvec)\, \sum_{h=0}^{N_3}
         \frac{w(\C_{0,h})}{w(\C_{0,0})} \(1 + O(s^3t^3/S^2)\)
         \vadjust{\vskip-1ex}
\]
in this case.  
Finally, if $\chi_2=1$ and $\chi_3=0$
then
\[ M(\svec,\tvec,\J) = 
     N(\svec,\tvec)\, \sum_{d=0}^{N_2}
         \frac{w(\C_{d,0})}{w(\C_{0,0})} \(1 + O(s^3t^3/S^2)\)
         \vadjust{\vskip-1ex}
\]
so in place of (\ref{george}) we simply have
$m_d =  w(\C_{d,0})/w(\C_{0,0}) = n_d(0)$ for $0\leq d\leq N_2$.
The remainder of the proof is identical except that there is
no need to apply (\ref{msum}) at the~end.
\end{proof}

\end{document}